\documentclass[12pt]{amsart}
\usepackage{amssymb,amsmath,amsthm}

\usepackage{fullpage}
\usepackage{colonequals}
\usepackage{hyperref}
\hypersetup{colorlinks=true,urlcolor=blue,citecolor=blue,linkcolor=blue}
\usepackage{mathtools}
\usepackage{comment}
\usepackage[percent]{overpic}

\numberwithin{equation}{section}
\newtheorem{theorem}[equation]{Theorem}
\newtheorem{thm}[equation]{Theorem}
\newtheorem{lemma}[equation]{Lemma}
\newtheorem{lem}[equation]{Lemma}

\newtheorem{prop}[equation]{Proposition}
\newtheorem{cor}[equation]{Corollary}

\theoremstyle{definition}

\theoremstyle{remark}

\newcommand{\jv}[1]{{\color{red} \textsf{[JV: #1]}}}

\DeclareMathOperator{\hht}{ht}
\DeclareMathOperator{\GL}{GL}

\newcommand{\dD}{\mathrm{d}}

\DeclarePairedDelimiter\abs{\lvert\hspace{0.1ex}}{\rvert}

\numberwithin{equation}{section}

%    Absolute value notation
% \newcommand{\abs}[1]{\lvert#1\rvert}

%    Blank box placeholder for figures (to avoid requiring any
%    particular graphics capabilities for printing this document).
%\newcommand{\blankbox}[2]{%
%  \parbox{\columnwidth}{\centering
%    Set fboxsep to 0 so that the actual size of the box will match the
%    given measurements more closely.
%    \setlength{\fboxsep}{0pt}%
%    \fbox{\raisebox{0pt}[#2]{\hspace{#1}}}%
%  }%}

\newcommand{\FF}{\mathbb{F}}
\newcommand{\ZZ}{\mathbb{Z}}
\newcommand{\QQ}{\mathbb{Q}}
\newcommand{\RR}{\mathbb{R}}

\newcommand{\F}{\FF}
\newcommand{\Q}{\QQ}
\newcommand{\R}{\RR}
\newcommand{\Z}{\ZZ}

\newcommand{\scrE}{\mathcal E}

\DeclareMathOperator{\Gal}{Gal}

\newcommand{\defi}[1]{\textsf{#1}} % for defined terms

\begin{document}

\title[Counting 3-isogenies]{Counting elliptic curves with \\ an isogeny of degree three}

%    Information for first author
\author{Maggie Pizzo}
%    Address of record for the research reported here
\address{Mathematics Department, Dartmouth College, Hanover, NH 03755}
\email{Magdalene.R.Pizzo.19@dartmouth.edu}

%    Information for second author
\author{Carl Pomerance}
\address{Mathematics Department, Dartmouth College, Hanover, NH 03755}
\email{carl.pomerance@dartmouth.edu}

\author{John Voight}
\address{Mathematics Department, Dartmouth College, Hanover, NH 03755}
\email{jvoight@gmail.com}

%    General info

\maketitle

\begin{abstract}
We count by height the number of elliptic curves over $\Q$ that possess an isogeny of degree $3$.
\end{abstract}

\section{Introduction} \label{intro}

Torsion subgroups of elliptic curves have long been an object of fascination for mathematicians.  By work of Duke \cite{Duke}, elliptic curves over $\Q$ with nontrivial torsion are comparatively rare.  Recently, Harron--Snowden \cite{HS} have refined this result by counting elliptic curves over $\Q$ with prescribed torsion, as follows.  Every elliptic curve $E$ over $\Q$ is defined uniquely up to isomorphism by an equation of the form 
\begin{equation} \label{eqn:Eyw2}
E \colon y^2=f(x)=x^3+Ax+B
\end{equation} 
with $A,B \in \Z$ such that $4A^3+27B^2 \neq 0$ and there is no prime $\ell$ such that $\ell^4 \mid A$ and $\ell^6 \mid B$.  We define the \defi{height} of such $E$ by 
\begin{equation}
\hht(E) \colonequals \max(\abs{4A^3},\abs{27B^2}).
\end{equation}
For $G$ a possible torsion subgroup (allowed by Mazur's theorem \cite{Mazur}), Harron--Snowden \cite[Theorem 1.5]{HS} prove that
\[ \#\{E : \hht(E) \leq X \text{ and } E(\Q)_{\textup{tors}} \simeq G\} \asymp X^{1/d(G)} \]
where $d(G) \in \Q$ is explicitly given, and $f(X) \asymp g(X)$ means that there exist positive constants $c_1,c_2$ such that $c_1 g(X) \leq f(X) \leq c_2 g(X)$.  In the case $G \simeq \Z/2\Z$, i.e., the case of 2-torsion, they
show the count is $cX^{1/2}+O(X^{1/3})$ for an explicit constant $c\approx 3.1969$ \cite[Theorem 5.5]{HS}.  (For weaker but related results, see also Duke \cite[Proof of Theorem 1]{Duke} and Grant \cite[Section~2]{Grant}.)

In this article, we count elliptic curves with a nontrivial cyclic isogeny defined over $\Q$.  An elliptic curve has a $2$-isogeny if and only if it has a $2$-torsion point, so the above result of Duke, Grant, and Harron--Snowden
handles this case.  The next interesting case concerns isogenies of degree $3$.  Our main result is as follows.

\begin{thm} \label{thm:mainthm}
Let $N_3(X)$ count the number of elliptic curves $E$ with $\hht(E) \leq X$ that possess a $3$-isogeny defined over $\Q$.  Then there exist constants $c_1,c_2$ such that
\[ N_3(X) = \frac{2}{3\sqrt{3}\zeta(6)}X^{1/2} + c_1X^{1/3}\log X + c_2 X^{1/3} + O(X^{7/24}). \]
Moreover, we have 
\[ c_1=\frac{c_0}{8\pi^2\zeta(4)}= 0.107437\ldots \]
where $c_0$ is an explicitly given integral \eqref{eq:c0eval}, and the constant $c_2$ is effectively computable.  
\end{thm}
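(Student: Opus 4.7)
I parametrize integral Weierstrass models $E: y^2 = x^3 + A x + B$ with a rational $3$-isogeny via rational roots of the $3$-division polynomial $\psi_3(x) = 3 x^4 + 6 A x^2 + 12 B x - A^2$. For a rational root $x_0 = p/q$ in lowest terms, multiplying $\psi_3(x_0) = 0$ by $q^4$ and reducing modulo $q$ forces $q \mid 3$; a further reduction modulo $3$, using $p^4 \equiv 1 \pmod{3}$, rules out $q = 3$. Hence the root $n := x_0$ is always an integer. Setting $k := A - 3n^2$ yields $A = 3n^2 + k$, and solving $\psi_3(n) = 0$ gives $B = k^2/(12n) - n^3$ when $n \neq 0$, with the divisibility $12n \mid k^2$ required for $B \in \Z$.

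The count $N_3(X)$ splits naturally according to whether the isogeny sits at $n = 0$ or $n \neq 0$. If $n = 0$, then $A = 0$ is forced, the curve is $y^2 = x^3 + B$, and minimality reduces to $B$ being sixth-power-free; a standard M\"obius sieve then yields
\[
\#\{B \in \Z \setminus \{0\} : \abs{B} \leq \sqrt{X/27}, \ B \text{ sixth-power-free}\} = \frac{2}{3\sqrt{3}\,\zeta(6)} X^{1/2} + O(X^{1/12}),
\]
producing the theorem's main term. If $n \neq 0$, I write $k = g(n)\ell$ where $g(n) := \prod_p p^{\lceil v_p(12 n)/2 \rceil}$ is the smallest positive integer with $12 n \mid g(n)^2$, so that $(n, \ell) \in (\Z \setminus \{0\}) \times \Z$ ranges freely. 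In the generic case (for instance $n$ squarefree and coprime to $6$), $g(n) = 6n$ and
\[
A = 3n(n + 2\ell), \qquad B = n(3\ell^2 - n^2).
\]
The height bounds then restrict $(n, \ell)$ to a $2$-dimensional region whose dominant subregion is the locus $\abs{n + 2\ell} \ll X^{1/3}/\abs{n}$, where $A$ is much smaller than its ambient size $n^2$. Summing $\ell$-interval lengths of order $X^{1/3}/\abs{n}$ over $1 \leq \abs{n} \ll X^{1/6}$ produces the secondary term $c_1 X^{1/3} \log X$; the factor $1/\zeta(4)$ arises from a M\"obius inclusion-exclusion enforcing minimality of $(A, B)$ in terms of $(n, \ell)$, and $c_0$ emerges as an integral over the smoothed $2$-dimensional region weighted by the local densities $1/g(n)$.

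The principal technical obstacle is the sharp error term $O(X^{7/24})$: while naive lattice-count estimates per $n$-fiber sum to $O(X^{1/6})$, the isolation of $c_2 X^{1/3}$ from the error requires smoothed lattice counts combined with exponential-sum estimates (via van der Corput or Voronoi summation) exploiting the curvature of the boundary curves of the height region. Three further lower-order contributions combine into $c_2 X^{1/3}$: (i) finer sieve corrections at primes $2$, $3$, and at non-squarefree $n$, yielding local correction factors; (ii) the overlap between the two cases, namely curves with $A = 0$ admitting a second $3$-isogeny at some $n \neq 0$, which forces $B = -n^3/4$ with $n$ even and contributes only $O(X^{1/6})$ to be subtracted; and (iii) overcounting from curves possessing multiple rational $3$-isogenies, a negligibly thin family since the mod-$3$ representation is generically irreducible. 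Combining these pieces and carrying out the integrals explicitly yields the stated asymptotic.
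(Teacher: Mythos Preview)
Your parametrization via the integral root $n$ and the auxiliary variable $\ell$ (with $k=g(n)\ell$) is valid and close in spirit to the paper's, but your analysis of the resulting region is incorrect, and this leads you to misidentify where the logarithm actually comes from.

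\textbf{Where the region analysis goes wrong.}  In your ``generic case'' $n$ squarefree and coprime to $6$, you have $A=3n(n+2\ell)$ and $B=n(3\ell^2-n^2)$.  You assert that the dominant constraint is $\lvert n+2\ell\rvert\ll X^{1/3}/\lvert n\rvert$ from the $A$-bound, giving $\ell$-intervals of length $\asymp X^{1/3}/\lvert n\rvert$ whose sum over $\lvert n\rvert\ll X^{1/6}$ produces $X^{1/3}\log X$.  But the $B$-bound $\lvert 3\ell^2-n^2\rvert\ll X^{1/2}/\lvert n\rvert$ is \emph{tighter} throughout the range $\lvert n\rvert\ll X^{1/6}$: it forces $\lvert\ell\rvert\ll X^{1/4}/\lvert n\rvert^{1/2}$, and summing this over squarefree $n$ gives only $\asymp X^{1/3}$, with no logarithm.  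Both height constraints are active, and the boundary of the region is genuinely piecewise (the paper encodes this as $h(\beta)=\min\{f(\beta),g(\beta)\}$ in the variable $\beta=w/(uv^3)$).  The logarithm does not come from the squarefree $n$ at all: writing $n=uv^2$ with $u$ squarefree, each fixed $v$ contributes $\asymp X^{1/3}$, and the $\log X$ arises from the harmonic sum $\sum_{v\ll X^{1/12}}1/v$.  Your $(n,\ell)$ variables hide this, because the relevant interval length for $\ell$ scales like $X^{1/4}/\lvert u\rvert^{1/2}$, depending on the squarefree kernel of $n$ rather than on $n$ itself.  The paper's three-variable substitution $a=uv^2$, $A=uvw$ makes this transparent.

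\textbf{The error term.}  Your claim that isolating $c_2X^{1/3}$ with error $O(X^{7/24})$ requires van der Corput or Voronoi-type exponential sums is also incorrect.  The paper obtains $O(X^{7/24})$ by a completely elementary Dirichlet-hyperbola decomposition: for the inner lattice sum $S=\sum_{\lvert u\rvert v^2\le h(\beta)X^{1/6}}\lvert u\rvert v^3$, one splits at $H=h(\beta)^{1/4}X^{1/24}$ into the pieces $\lvert u\rvert\le H^2$, $v\le H$, and their overlap, each handled by partial summation with the standard error $O(t^{1/2})$ in the squarefree count.  No oscillatory input is needed.

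Finally, the overcount from curves with two rational $3$-isogenies is not merely ``negligibly thin'' on general principles; one needs the explicit one-parameter family (split Cartan image mod $3$) and a Harron--Snowden-type count to get the bound $O(X^{1/6}\log X)$, which is what the paper does.
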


We obtain the same asymptotic in Theorem \ref{thm:mainthm} if we instead count elliptic curves \emph{equipped} with a $3$-isogeny (that is, counting with multiplicity): see Proposition \ref{prop:samecount}.  Surprisingly, the main term of order $X^{1/2}$ counts \emph{just} those elliptic curves with $A=0$ and $j$-invariant equal to $0$ (having complex multiplication by the quadratic order of discriminant $-3$).  Theorem \ref{thm:mainthm} matches computations performed out to $X=10^{25}$---see section \ref{sec:computations}.  
%Finally, our method suggests a way to compute explicitly the constant $c_2$ and to achieve a power-saving error term.  

The difficulty in computing the constant $c_2$ in the above theorem arises in applying a knotty batch of local conditions; our computations suggest that $c_2 \approx 0.16$.  If we count without these conditions, we find the explicit constant $c_6 =1.1204\dots$, given in \eqref{eqn:c6}---it is already quite complicated.

Theorem \ref{thm:mainthm} may be interpreted in alternative geometric language as follows.  Let $X_0(3)$ be the modular curve parametrizing (generalized) elliptic curves equipped with an isogeny of degree $3$.  
Then $N(X)$ counts rational points of bounded height on $X_0(3)$  with respect to the height arising from the pullback of the natural height on the $j$-line $X(1)$.  
 From this vantage point, the main term corresponds to a single elliptic point of order $3$ on $X_0(3)$!  The modular curves $X_0(N)$ are not fine moduli spaces (owing to quadratic twists), so our proof of Theorem \ref{thm:mainthm} is quite different than the method used by Harron--Snowden: in particular, a logarithmic term presents itself for the first time.  We hope that our method and the lower-order terms in our result will be useful in understanding counts of rational points on stacky curves more generally.

\subsection*{Contents} 
The paper is organized as follows.  We begin in section \ref{sec:setup} with a setup and exhibiting the main term, then in section \ref{sec:magnitude} as a warmup we prove the right order of magnitude for the secondary term.  In section \ref{sec:asymp}, we refine this approach to prove an asymptotic for the secondary term, and then we exhibit a tertiary term in section \ref{sec:secterm}.  We conclude in section \ref{sec:computations} with our computations.

\subsection*{Acknowledgments} 
The authors thank John Cullinan for helpful conversations.  Pizzo was supported by the Jack Byrne Scholars program at Dartmouth College.  Voight was supported by a Simons Collaboration grant (550029).

\section{Setup} \label{sec:setup}

In this section, we set up the problem in a manner suitable for direct investigation.  We continue the notation from the introduction.  

Let $\scrE$ denote the set of elliptic curves $E$ over $\Q$ in the form \eqref{eqn:Eyw2} (minimal, with nonzero discriminant).  For $X \in \R_{>0}$, let $\scrE_{\leq X} \colonequals \{E \in \scrE : \hht(E) \leq X\}$ be the set of elliptic curves $E$ over $\Q$ with height at most $X$.  We are interested in asymptotics for the functions
\begin{equation} \label{eqn:n3xp}
\begin{aligned}
N_3(X) \colonequals& \#\{E \in \scrE_{\leq X} : \text{$E$ has a $3$-isogeny defined over $\Q$}\}, \\
N_3'(X) \colonequals& \#\{(E,\phi) : E \in \scrE_{\leq X} \text{ and $\phi\colon E \to E'$ is a $3$-isogeny defined over $\Q$}\}.
\end{aligned}
\end{equation}

To that end, let $E=E_{A,B} \in \scrE$, with $A,B \in \Z$.  The $3$-division polynomial of $E$ \cite[Exercise 3.7]{Silverman} is equal to
\begin{equation}
\psi(x)=\psi_{A,B}(x) \colonequals 3x^4+6Ax^2+12Bx-A^2;
\end{equation}
the roots of $\psi(x)$ are the $x$-coordinates of nontrivial $3$-torsion points on $E$.

\begin{lem} \label{lem:psiroot}
The elliptic curve $E$ has a $3$-isogeny defined over $\Q$ if and only if $\psi(x)$ has a root $a \in \Q$.
\end{lem}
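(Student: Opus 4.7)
The plan is to translate the existence of a rational $3$-isogeny into a statement about Galois-stable subgroups of $E[3]$, and then read off that condition from the $x$-coordinates of the nontrivial $3$-torsion points.

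First I would recall the standard correspondence: $3$-isogenies $\phi\colon E \to E'$ defined over $\Q$ (up to isomorphism of the target) are in bijection with finite $\Gal(\overline{\Q}/\Q)$-stable subgroup schemes $K \subseteq E$ of order $3$.  Since $3$ is coprime to the characteristic, such a $K$ is étale, and $K(\overline{\Q})$ is a subgroup of order $3$ in $E[3](\overline{\Q})$.  Any such subgroup is of the form $\{O,P,-P\}$ for some nontrivial $3$-torsion point $P$.  Thus I need to show: $E$ admits a $3$-isogeny over $\Q$ if and only if there exists a nontrivial $P \in E[3](\overline{\Q})$ with $\{O,P,-P\}$ stable under Galois.

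Next, I would analyze the Galois stability condition.  For any $\sigma \in \Gal(\overline{\Q}/\Q)$, the point $\sigma(P)$ is again a nontrivial element of the subgroup, so $\sigma(P) \in \{P,-P\}$, and in particular $x(\sigma(P)) = x(P)$, since $x$ is invariant under the hyperelliptic involution.  Because $x$-coordinates of points on $E\colon y^2=f(x)$ are themselves morphisms defined over $\Q$, this says precisely that $x(P)$ is fixed by $\Gal(\overline{\Q}/\Q)$, i.e.\ $x(P) \in \Q$.  Conversely, if $x(P) \in \Q$, then $\sigma(P)$ and $P$ have the same $x$-coordinate, so $\sigma(P) = \pm P$, and the subgroup $\{O,P,-P\}$ is Galois-stable.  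Therefore $E$ has a $3$-isogeny over $\Q$ iff some nontrivial $3$-torsion point has a rational $x$-coordinate.

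Finally, I would invoke the stated property of $\psi(x) = \psi_{A,B}(x)$: its roots in $\overline{\Q}$ are exactly the $x$-coordinates of the nontrivial $3$-torsion points of $E$.  Hence a nontrivial $3$-torsion point has rational $x$-coordinate iff $\psi(x)$ has a root $a \in \Q$, completing the equivalence.

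I do not expect a serious obstacle here: the only minor point to handle carefully is the identification of rational isogenies with Galois-stable order-$3$ subgroup schemes (as opposed to, e.g., only rational kernel points), but this is standard from the theory of étale subgroup schemes of elliptic curves in characteristic zero and is already built into references such as Silverman.
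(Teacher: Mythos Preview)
Your proposal is correct and follows essentially the same argument as the paper: both directions hinge on the observation that a cyclic subgroup $\{O,P,-P\}\subset E[3]$ is $\Gal_{\Q}$-stable if and only if $x(P)\in\Q$, which in turn is equivalent to $\psi(x)$ having a rational root. The only cosmetic difference is that the paper makes the converse direction explicit by writing down the codomain of the resulting isogeny via V\'elu's formula, whereas you appeal to the general correspondence between rational isogenies and Galois-stable subgroup schemes.
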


\begin{proof}
Let $\varphi \colon E \to E'$ be a $3$-isogeny defined over $\Q$.  Then $\ker \varphi=\{\infty,\pm P\}$ is stable under the absolute Galois group $\Gal_\Q$, so $\sigma(P)=\pm P$.  Thus, $\sigma(x(P))=x(P)$ for all $\sigma \in \Gal_\Q$ and hence $a=x(P) \in \Q$ is a root of $\psi(x)$ by definition.  Conversely, if $\psi(a)=0$ with $a \in \Q$, then letting $\pm P \colonequals (a,\pm\sqrt{f(a)})$ we obtain $C \colonequals \{\infty,\pm P\}$ a Galois stable subgroup of order $3$ and accordingly the map $\varphi \colon E \to E/C=E'$ is a $3$-isogeny defined over $\Q$: explicitly, by the formula of V\'elu we have 
\[ E' \colon y^2 = x^3 - 16(9A+30a^2)x - (9A^2/a + 114Aa + 253a^3) \]
(but such $E'$ is not necessarily in our designated form).  
\end{proof}
% \carlp{Is $E'=E/C$ and why do we need V\'elu's formula?  Just showing off?  That's okay!}\jv{Yes, fixed; and yes, just being explicit.  We can hide this if it's too showy :)}

\begin{lem} \label{lem:ainZZ}
If $a \in \Q$ is a root of $\psi(x)$, then $a \in \Z$.
\end{lem}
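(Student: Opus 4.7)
The plan is to apply a rational-root-type argument directly to $\psi(x)=3x^4+6Ax^2+12Bx-A^2$, which has integer coefficients since $A,B\in\Z$. I would write $a=p/q$ in lowest terms with $q\geq 1$ and clear denominators in $\psi(a)=0$ by multiplying through by $q^4$, obtaining the integer relation
\[
3p^4 + 6Ap^2q^2 + 12Bpq^3 - A^2q^4 = 0.
\]
Rearranging gives $3p^4 = q^2\bigl(A^2q^2 - 12Bpq - 6Ap^2\bigr)$, so $q^2 \mid 3p^4$. Since $\gcd(p,q)=1$ implies $\gcd(p^4,q^2)=1$, one concludes $q^2\mid 3$; the only positive perfect square dividing $3$ is $1$, so $q=1$ and $a\in\Z$.

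The argument is essentially a strengthened rational root theorem that leverages the fact that every non-leading coefficient of $\psi$ carries a factor matching the leading coefficient $3$ (indeed, $6A,\,12B,\,-A^2$ all contribute terms divisible by $q^2$ after scaling). No deeper input is required---in particular, the minimality condition on $(A,B)$ plays no role here, and the formula from V\'elu in the previous lemma is not needed. I expect no substantive obstacle; this is a short elementary divisibility check, and the only point demanding mild care is verifying that the three non-leading terms each acquire at least $q^2$ from the substitution $a=p/q$.
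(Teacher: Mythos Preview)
Your argument is correct and is essentially the same rational-root-test approach the paper uses: the paper first applies the rational root theorem to get $a_0=3a\in\Z$, then substitutes into $27\psi(a)=a_0^4+18Aa_0^2+108Ba_0-27A^2=0$ to deduce $3\mid a_0$. Your version compresses these two steps into one by observing directly that all non-leading terms carry a factor of $q^2$, forcing $q^2\mid 3$; this is a slightly cleaner packaging of the same divisibility observation.
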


\begin{proof}
For the first statement, by the rational root test, $a_0=3a \in \Z$.  Thus 
\begin{equation} 
0 = 27\psi(a)=a_0^4+18Aa_0^2+108Ba_0-27A^2 
\end{equation}
so $3 \mid a_0$, whence $a \in \Z$.  
\end{proof}

Although the polynomial $\psi(x)$ is irreducible in $\Z[A,B][x]$, the special case where $A=0$ gives $\psi_{0,B}(x)=3x(x^3+4B)$ and so $a=0$ is automatically a root, corresponding to the elliptic curve $E \colon y^2 = x^3 + B$ and the isogeny $\varphi \colon E \to E'$ by
\begin{equation} 
\varphi(x,y)=\left(\frac{x^3+4B}{x^2}, \frac{x^3-8B}{x^3}y\right)
\end{equation}
We count these easily.

\begin{lem} \label{lem:Aeq0cnt}
Let $N_3(X)_{A=0}$ 
%and $N_3'(X)_{A=0}$ 
be defined as in \eqref{eqn:n3xp} but restricted to $E \in \scrE_{\leq X}$ with $A=0$.  Then 
\[ N_3(X)_{A=0}=\frac{2}{3\sqrt{3}\zeta(6)}X^{1/2} + O(X^{1/6})
~\hbox{ and }~N_3'(X)_{A=0} = N_3(X)_{A=0}+O(X^{1/6}). \]
\end{lem}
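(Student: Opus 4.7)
The plan is to reduce both counts to counting sixth-power-free integers in an interval. When $A=0$, the minimality condition ``no prime $\ell$ with $\ell^4\mid A$ and $\ell^6\mid B$'' collapses to requiring $B$ to be sixth-power-free (and nonzero), while the height simplifies to $\hht(E_{0,B})=27B^2$; every such $E$ automatically possesses a $3$-isogeny by \lemref{lem:psiroot}, coming from the root $a=0$ of $\psi_{0,B}(x)=3x(x^3+4B)$. Hence $N_3(X)_{A=0}$ equals twice the number of sixth-power-free integers $B$ in $[1,Y]$, where $Y \colonequals X^{1/2}/(3\sqrt 3)$.

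I would then apply the standard M\"obius sieve
\[ \#\{1\le B\le Y: B\text{ is sixth-power-free}\}=\frac{Y}{\zeta(6)}+O(Y^{1/6}), \]
substituting $Y=X^{1/2}/(3\sqrt 3)$ to obtain
\[ N_3(X)_{A=0}=\frac{2}{3\sqrt 3\,\zeta(6)}\,X^{1/2}+O(X^{1/12}), \]
which is comfortably stronger than the claimed bound.

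For the second estimate, the excess $N_3'(X)_{A=0}-N_3(X)_{A=0}$ counts pairs $(E_{0,B},\phi)$ with $\phi$ corresponding to a rational root of $\psi_{0,B}$ other than $a=0$, equivalently to an integer solution of $x^3=-4B$ (invoking \lemref{lem:ainZZ}). Comparing $2$-adic valuations forces $x=-2c$ and $B=2c^3$ for some nonzero $c\in\Z$, so the excess is bounded by the number of integers $c$ with $\abs{2c^3}\le Y$, which is $O(Y^{1/3})=O(X^{1/6})$, yielding the stated bound. No step presents a genuine obstacle; the only small observation required is the parametrization $B=2c^3$ of the extra rational roots, after which both estimates fall out of routine sieving.
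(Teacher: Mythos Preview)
Your proposal is correct and follows essentially the same approach as the paper: both reduce the first count to a standard sieve for sixth-power-free integers in $[-Y,Y]$ with $Y=X^{1/2}/(3\sqrt3)$, obtaining the sharper error $O(X^{1/12})$, and both handle the excess $N_3'(X)_{A=0}-N_3(X)_{A=0}$ by observing that an extra isogeny forces $-4B$ to be an integer cube, whence the count is $O(X^{1/6})$. Your explicit parametrization $B=2c^3$ via a $2$-adic check is a small refinement of the paper's one-line remark that ``$-4B$ is a cube'', but the substance is identical.
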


\begin{proof}
In light of the above, we have 
\[ N_3(X)_{A=0}=\#\{B \in \Z : \text{$\abs{27B^2}\leq X$ and $\ell^6 \nmid B$ for any prime $\ell$}\}; \]
a standard sieve gives this count as $\displaystyle{\frac{2}{3\sqrt{3}\zeta(6)}}X^{1/2} + O(X^{1/12})$, see
Pappalardi \cite{Pappalardi}.  
If such an elliptic curve had another $3$-isogeny, corresponding to a root of $\psi(x)/x=x^3+4B$, then $-4B$ is a cube and the count of such is $O(X^{1/6})$. 
\end{proof}

With these lemmas in hand, we define our explicit counting function.  For $X>0$, let $N(X)$ denote the number of ordered triples $(A,B,a) \in \Z^3$ satisfying:
\begin{enumerate}
\item[(N1)] \label{N1}
$A\ne 0$ and $\psi_{A,B}(a)=0$;
\item[(N2)] \label{N2}
$\abs{4A^3}\leq X$ and $\abs{27B^2}\leq X$;
\item[(N3)] \label{N3}
$4A^3+27B^2\ne0$; and
\item[(N4)] \label{N4}
there is no prime $\ell$ with $\ell^4\mid A$ and $\ell^6\mid B$.  
\end{enumerate}
That is to say, we define
\begin{equation} \label{defn:NXdef}
N(X) \colonequals \#\{(A,B,a) \in \Z^3 : \text{all conditions (N1)--(N4) hold}\}.
\end{equation}

We have excluded from $N(X)$ the count for $A=0$ from the function $N(X)$; we have handled this in 
Lemma \ref{lem:Aeq0cnt}.

\begin{cor} \label{cor:N3p}
We have
\[ N_3'(X) = \frac{2}{3\sqrt{3}\zeta(6)}X^{1/2} + N(X) + O(X^{1/6}). \]
\end{cor}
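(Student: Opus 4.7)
The plan is to decompose the count $N_3'(X)$ according to whether the coefficient $A$ is zero or nonzero, handle the $A=0$ case with \lemref{lem:Aeq0cnt}, and then establish a bijection between the $A\ne 0$ part and the triples counted by $N(X)$.

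First, write
\[ N_3'(X) = N_3'(X)_{A=0} + N_3'(X)_{A\ne 0}. \]
The first summand is given directly by \lemref{lem:Aeq0cnt} as $\frac{2}{3\sqrt{3}\zeta(6)}X^{1/2} + O(X^{1/6})$, which already accounts for the main term and the error term in the corollary.

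Next I would produce a bijection between pairs $(E,\phi)$ contributing to $N_3'(X)_{A\ne 0}$ and triples $(A,B,a)\in \Z^3$ satisfying (N1)--(N4). Given such a pair, write $E = E_{A,B}$ with $A\neq 0$, so conditions (N2), (N3), (N4) encode precisely that $E\in \scrE_{\leq X}$. A $3$-isogeny $\phi$ is determined by its kernel $C = \{\infty,\pm P\}$, a Galois-stable subgroup of order $3$; by the argument in the proof of \lemref{lem:psiroot}, the $x$-coordinate $a \colonequals x(P)$ lies in $\Q$ and is a root of $\psi_{A,B}$, and by \lemref{lem:ainZZ} we have $a\in\Z$, giving condition (N1). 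Conversely, an integer root $a$ of $\psi_{A,B}$ yields via the construction in \lemref{lem:psiroot} a Galois-stable order-$3$ subgroup $C$ and a corresponding $3$-isogeny $\phi \colon E \to E/C$; moreover, distinct roots $a$ give distinct kernels $C$ (since the kernel determines $x(P)$ uniquely) and hence nonisomorphic isogenies. Thus the assignment $(E,\phi)\mapsto (A,B,a)$ is a bijection, giving $N_3'(X)_{A\ne 0} = N(X)$.

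The only mild subtlety is checking that counting pairs $(E,\phi)$ really matches counting integer roots $a$ (rather than, say, pairs $\pm P$ or something finer); this is handled by observing that the isogeny $\phi$ is equivalent data to its kernel subgroup, and the kernel subgroup of order $3$ is in bijection with its unique nontrivial $x$-coordinate. Combining the two contributions yields the claimed formula. No step here presents a real obstacle; the corollary is essentially a bookkeeping consequence of \lemref{lem:psiroot}, \lemref{lem:ainZZ}, and \lemref{lem:Aeq0cnt}.
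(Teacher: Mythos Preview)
Your proposal is correct and follows exactly the approach the paper intends: the paper's proof simply says the corollary is ``immediate from Lemmas~\ref{lem:psiroot}, \ref{lem:ainZZ}, and~\ref{lem:Aeq0cnt}'', and you have spelled out precisely the decomposition and bijection those lemmas implicitly furnish. Your observation that $N_3'(X)_{A\neq 0}=N(X)$ exactly (with no error term) is the content the paper leaves to the reader.
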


\begin{proof}
This corollary is immediate from Lemmas \ref{lem:psiroot}, \ref{lem:ainZZ}, and \ref{lem:Aeq0cnt}.
\end{proof}

To conclude this section, we compare $N_3(X)$ and $N_3'(X)$.  

\begin{prop} \label{prop:samecount}
We have
\[ N_3'(X) = N_3(X) + O(X^{1/6}\log X) \]
\end{prop}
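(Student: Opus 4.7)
The plan is to rewrite $N_3'(X) - N_3(X)$ as a weighted count of elliptic curves with two or more distinct $\Q$-rational $3$-isogenies. For $E = E_{A,B} \in \scrE$, let $k_E$ be the number of distinct $\Q$-rational $3$-isogenies from $E$; by Lemmas \ref{lem:psiroot}--\ref{lem:ainZZ} this equals the number of distinct integer roots of $\psi_{A,B}$. Using $k - 1 \leq \binom{k}{2}$ for $k \geq 1$,
\[ N_3'(X) - N_3(X) = \sum_{\substack{E \in \scrE_{\leq X} \\ k_E \geq 1}}(k_E - 1) \leq \sum_{E \in \scrE_{\leq X}}\binom{k_E}{2}, \]
which counts unordered pairs $\{a,b\}$ of distinct integer roots of $\psi_{A,B}$ with $E_{A,B} \in \scrE_{\leq X}$.

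Given such a pair, the factorization $\psi_{A,B}(x) = 3(x-a)(x-b)(x^2 + (a+b)x + q)$ and coefficient comparison will produce $A^2 + 6abA + 3ab(a^2+ab+b^2) = 0$ together with $4B = -(a+b)(2A + a^2 + b^2)$. Solving the quadratic yields $A = -3ab \pm |a-b|\sqrt{-3ab}$, so integrality of $A$ forces $ab < 0$ (since $ab = 0$ gives $A = 0$, already excluded) and $-3ab$ to be a perfect square. Writing $a = c$, $b = -d$ with $c,d \in \Z_{>0}$ and $cd = 3n^2$, one finds $A = 9n^2 \pm 3n(c+d)$; combined with $c+d \geq 2\sqrt{3}\,n$, the height bound $|4A^3| \leq X$ implies $n \ll X^{1/6}$ and $n(c+d) \ll X^{1/3}$. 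Positive integer pairs $(c,d)$ with $cd$ equal to $3$ times a square are parametrized (noting that the squarefree part of $cd$ must equal $3$) by $(c,d) \in \{(3wu^2,\,wv^2),\ (wu^2,\,3wv^2)\}$ with $w,u,v \in \Z_{>0}$ and $w$ squarefree and coprime to $3$; in either case $n = wuv$ and $n(c+d) = w^2\,uv(u^2+3v^2)$. Summing over $w$ first bounds the total count by a constant times
\[ X^{1/6}\sum_{\substack{u,v \geq 1 \\ uv(u^2+3v^2) \ll X^{1/3}}}\frac{1}{\sqrt{uv(u^2+3v^2)}}. \]

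The main obstacle is to show this inner sum is $O(\log X)$. Using $u^2+3v^2 \asymp \max(u,v)^2$ and the symmetry in $u,v$, the sum reduces (say in the range $u \leq v$) to $\sum_v v^{-3/2}\sum_{u \leq \min(v,\,X^{1/3}/v^3)} u^{-1/2}$. When $v \leq X^{1/12}$ the $u$-sum is $\asymp \sqrt v$ and the outer sum contributes $\sum_{v \leq X^{1/12}} 1/v \sim \tfrac{1}{12}\log X$; when $v \in (X^{1/12}, X^{1/9}]$ the $u$-truncation forces the contribution to be $O(1)$. Combining these gives $N_3'(X) - N_3(X) \ll X^{1/6}\log X$. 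The delicate point is that a naive divisor-sum estimate $\sum_{n \leq X^{1/6}} d(3n^2) \ll X^{1/6}(\log X)^2$ would be one logarithm too weak; exploiting the quadratic-form structure of $cd = 3n^2$, rather than an arbitrary divisor condition, is what saves the extra logarithm needed to match the claimed bound.
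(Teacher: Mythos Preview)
Your argument is correct and takes a genuinely different route from the paper's proof. The paper observes that an elliptic curve with two rational $3$-isogenies has its mod-$3$ Galois image contained in the diagonal torus of $\GL_2(\F_3)$, writes down the corresponding one-parameter family via an explicit $j$-invariant (citing Sutherland--Zywina), and then invokes Harron--Snowden's general lattice-point estimate \cite[Proposition~4.1]{HS} to obtain the $X^{1/6}\log X$ bound. Your proof, by contrast, stays entirely within the elementary framework of the division polynomial: you extract the Diophantine constraint $-3ab=\square$ from two simultaneous roots of $\psi_{A,B}$, parametrize the solutions explicitly, and carry out the lattice-point count by hand. The paper's route is shorter once the external machinery is in place and makes the modular-curve structure transparent; your route is fully self-contained and dovetails with the $u,v,w$ variables used elsewhere in the paper. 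In particular, your observation that the quadratic-form structure of $cd=3n^2$ (rather than a bare divisor bound) is what saves the extra logarithm is a nice point not visible in the paper's approach.

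One small imprecision: when you write that $ab=0$ gives $A=0$, ``already excluded,'' note that $A=0$ is not excluded from the statement of the proposition; rather, Lemma~\ref{lem:Aeq0cnt} shows that the $A=0$ contribution to $N_3'(X)-N_3(X)$ is $O(X^{1/6})$, which is absorbed in your error term. A one-line remark to this effect would make the argument airtight.
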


\begin{proof}
The difference $N_3'(X)-N_3(X)$ counts elliptic curves with more than one $3$-isogeny.  
Let $E$ be an elliptic curve with (at least) two $3$-isogenies $\varphi_i\colon E \to E_i'$ and let $\ker\varphi_i=\langle P_i\rangle$  for $i=1,2$.  Then $\langle P_1,P_2 \rangle = E[3]$, and so the image of $\Gal_\Q$ acting on $E[3]$ is a subgroup of the group of diagonal matrices in $\GL_2(\F_3)$.  This property is preserved by any twist of $E$, so such elliptic curves are characterized by the form of their $j$-invariant, explicitly \cite[Table 1, 3D${}^0$-3a]{SZ}
\begin{equation} 
j(t) = \left(\frac{t(t+6)(t^2-6t+36)}{(t-3)(t^2+3t+9)}\right)^3
\end{equation}
for $t \in \Q \setminus \{3\}$.  Computing an elliptic surface for this $j$-invariant, we conclude that every such $E$ is of the form $y^2=x^3+u^2A(t)x+u^3B(t)$ for some $t,u \in \Q$, where 
\begin{equation}
\begin{aligned}
A(t) &= -3t (t+6)(t^2-6t+36) = -3t^4-648t,  \\
B(t) &= 2(t^2-6t-18)(t^4+6t^3+54t^2-108t+324) = 2t^6-1080t^3-11664\, .
\end{aligned}
\end{equation}
Then by Harron--Snowden \cite[Proposition 4.1]{HS} (with $(r,s)=(4,6)$ so $m=1$ and $n=2$), the number of such elliptic curves is bounded above (and below) by a constant times $X^{1/6}\log X$, as claimed.  
\end{proof}

In light of the above, our main result will follow from an asymptotic for the easier function $N(X)$ defined in \eqref{defn:NXdef}, and so we proceed to study this function.

\section{Order of magnitude} \label{sec:magnitude}

In this section, we introduce new variables $u,v,w$ that will be useful in the sequel, and
provide an argument that shows the right order of magnitude.  This argument explains the provenance of the logarithmic term in a natural way and motivates our approach.
%, even if the result itself is supplanted by the results in the next section.  
We recall \eqref{defn:NXdef}, the definition of $N(X)$.  

\begin{theorem}
\label{thm:order}
There exist $c_3,c_4,X_0 \in \R_{>0}$ such that for all $X\ge X_0$, we have
$$
c_3X^{1/3}\log X\le N(X)\le c_4X^{1/3}\log X.
$$
\end{theorem}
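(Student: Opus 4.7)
The plan is to reparametrize triples $(A, B, a)$ satisfying (N1) and then reduce $N(X)$ to a controlled one-parameter sum. Completing the square in $A$ turns $\psi_{A,B}(a) = 3a^4 + 6Aa^2 + 12Ba - A^2 = 0$ into
\[ (A - 3a^2)^2 = 12 a (a^3 + B). \]
Since $A \ne 0$ forces $a \ne 0$ (otherwise $\psi_{A,B}(0) = -A^2$), I write $12a = uv^2$ with $u$ the signed squarefree part of $12a$ and $v \ge 1$ an integer. A prime-by-prime check shows that $12a \mid (A-3a^2)^2$ if and only if $uv \mid A - 3a^2$; setting $A - 3a^2 = uvw$ with $w \in \Z$ and simplifying yields the clean parametrization
\[ A = 3a^2 + uvw, \qquad B = uw^2 - a^3, \]
which is a bijection between triples satisfying (N1) with $A \ne 0$ and pairs $(a, w) \in (\Z \setminus \{0\}) \times \Z$ (with $u, v$ determined by $a$).

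Under this parametrization, the height bounds in (N2) become $|3a^2 + uvw| \ll X^{1/3}$ and $|uw^2 - a^3| \ll X^{1/2}$. Combining them rules out $|a| \gg X^{1/6}$ since $|uvw|^2 = 12a(a^3 + B) \sim 12 a^4$ forces $|A| \gtrsim a^2$ for large $a$. For each admissible $a$, the $B$-bound gives $|u|w^2 \le |a|^3 + O(X^{1/2}) \ll X^{1/2}$, hence at most $O(X^{1/4}/\sqrt{|u|})$ values of $w$. Dropping (N3) and (N4), which only reduces the count, gives
\[ N(X) \ll X^{1/4}\sum_{1 \le |a| \ll X^{1/6}} \frac{1}{\sqrt{|u_a|}}, \]
where $u_a$ denotes the signed squarefree part of $12 a$.

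The key analytic input is the estimate
\[ \sum_{|a| \le T} \frac{1}{\sqrt{|u_a|}} \asymp \sqrt{T}\,\log T, \]
obtained by writing $a = a_1^2 a_2$ with $a_2$ squarefree: then $|u_a| \asymp a_2$ (after handling the primes $2, 3$ coming from the factor $12$), summing over $a_1 \le \sqrt{T/a_2}$ gives $\sqrt{T/a_2}/\sqrt{a_2}$, and then summing $1/a_2$ over squarefree $a_2 \le T$ produces $\log T$. Taking $T \asymp X^{1/6}$ yields $N(X) \ll X^{1/3}\log X$, and this step is exactly where the logarithm comes from.

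For the matching lower bound, I fix a small $\epsilon > 0$ and restrict to $1 \le a \le \epsilon X^{1/6}$ together with $|w| \le \epsilon X^{1/4}/\sqrt{u_a}$. Conditions (N1) and (N2) then hold by construction. Condition (N3) is a nonzero polynomial equation in $w$, ruling out $O(1)$ values per $a$. Condition (N4) is a local minimality condition at each prime $\ell$, and by an Eratosthenes-type sieve it holds on a positive proportion of such $(a, w)$. The main obstacle I anticipate is making this positive proportion \emph{uniform in $a$}: one must check that the failure density for (N4) at each prime $\ell$ is bounded away from $1$ independently of $a$ and $u_a$, which can be done by a prime-by-prime analysis of the congruence constraints $\ell^4 \mid 3a^2+uvw$ and $\ell^6 \mid uw^2 - a^3$. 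Granting this, summing over $a$ and using the estimate above yields
\[ N(X) \gg X^{1/4}\sum_{1 \le a \le \epsilon X^{1/6}} \frac{1}{\sqrt{u_a}} \gg X^{1/3}\log X, \]
completing the order-of-magnitude estimate.
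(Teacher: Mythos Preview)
Your completing-the-square parametrization $(A-3a^2)^2=12a(a^3+B)$ is valid and yields a genuine bijection; the paper instead observes directly that $a\mid A^2$ forces $uv\mid A$ when $a=uv^2$ with $u$ squarefree, giving the simpler form $A=uvw$, $12B=uw^2-6u^2v^3w-3u^3v^6$.  Your $(u,v)$ is attached to $12a$ rather than $a$, so the two parametrizations differ only by bounded $2$- and $3$-adic factors, and the analytic skeleton of your upper bound---reducing to $X^{1/4}\sum_{|a|\ll X^{1/6}}|u_a|^{-1/2}$ and then evaluating this as $\asymp X^{1/12}\log X$ via the decomposition $a=a_1^2a_2$---matches the paper's computation exactly.  (One small point: your justification ``$|uvw|^2\sim 12a^4$ forces $|A|\gtrsim a^2$'' tacitly uses $\sqrt{12}\neq 3$; it is correct but worth saying.)

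The substantive gap is in the lower bound.  You propose an Eratosthenes sieve for (N4) over all $(a,w)$ and correctly identify uniformity in $a$ as the obstacle, but you do not carry out the prime-by-prime analysis, so the argument is incomplete as written.  The paper sidesteps this entirely by exploiting the \emph{multiplicative} shape of its parametrization: since $A=uvw$, restricting to $u,v,w$ positive, odd, and squarefree with $3\mid w$ makes $A$ automatically cube-free (so no prime has $\ell^4\mid A$) and simultaneously forces the integrality conditions (B1)--(B3) for $B$.  In your additive form $A=3a^2+uvw$, no such clean restriction is available, and the sieve really is necessary.  It can be completed---one checks case by case on $v_\ell(a)$ that the density of $w$ with $\ell^4\mid A$ and $\ell^6\mid B$ is at most $\ell^{-3}$ for $\ell>3$, and bounded at $\ell=2,3$, so a union bound leaves a positive proportion---but until that verification is written out, the lower bound remains a sketch rather than a proof.
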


We begin with a few observations. First, if $A,B,a \in \Z$ and $\psi_{A,B}(a)=0$, then 
\begin{equation}
\label{eq:12B}
12B=\frac{A^2}a-6Aa-3a^3.
\end{equation}

\begin{lem} \label{lem:condBs}
Let $A,a \in \Z$ with $a \neq 0$.  Then $(A^2/a)-6Aa-3a^3 \in 12\Z$ if and only if all of the following conditions hold:
\begin{enumerate}
\item[(B1)] \label{B1} $a\mid A^2$ and $3\mid(A^2/a)$; 
\item[(B2)] \label{B2} $A,a$ have the same parity; and
\item[(B3)] \label{B3} If $A,a$ are both even, then $4 \mid (A^2/a)$.
\end{enumerate}
\end{lem}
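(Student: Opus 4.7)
The strategy is to split divisibility by $12$ into divisibility by $4$ and by $3$ via the Chinese remainder theorem. First note that $6Aa$ and $3a^3$ are already integers, so $(A^2/a) - 6Aa - 3a^3$ lies in $\Z$ if and only if $a \mid A^2$, which is the first half of (B1). Throughout the rest I assume $a \mid A^2$.

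For the mod-$3$ part, the summands $6Aa$ and $3a^3$ vanish modulo $3$, so divisibility by $3$ is equivalent to $3 \mid A^2/a$, the remaining half of (B1).

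For the mod-$4$ part, I would case-split on the parities of $A$ and $a$. The case $a$ even, $A$ odd is vacuous, since then $A^2$ is odd and $a \nmid A^2$. If $a$ is odd and $A$ is even, then $A^2/a$ and $6Aa$ are even while $3a^3$ is odd, so the expression is odd and fails divisibility by $4$; this matches the exclusion by (B2). If $A$ and $a$ are both odd, then $a^2 \equiv 1 \pmod 4$ together with $a(A^2/a) = A^2 \equiv 1 \pmod 4$ forces $A^2/a \equiv a \pmod 4$, and also $3a^3 \equiv 3a \pmod 4$ and $6Aa \equiv 2Aa \pmod 4$, so
\[
A^2/a - 6Aa - 3a^3 \equiv a - 2Aa - 3a \equiv -2a(1 + A) \equiv 0 \pmod 4
\]
because $1 + A$ is even; divisibility by $4$ is automatic and (B3) is vacuously satisfied. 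Finally, if both $A$ and $a$ are even, then $8 \mid 6Aa$ and $8 \mid 3a^3$, so the whole expression is $\equiv A^2/a \pmod 4$, and divisibility by $4$ is exactly the content of (B3).

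Combining the mod-$3$ and mod-$4$ analyses yields the biconditional. I do not expect a real obstacle here; the only step that is not immediate is the both-odd case, where the key observation is that the two odd summands $A^2/a$ and $3a^3$ must be shown to cancel modulo $4$, for which the identity $A^2/a \equiv a \pmod 4$ is the essential input.
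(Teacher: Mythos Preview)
Your proof is correct and is precisely the straightforward verification the paper alludes to without writing out: the paper's entire proof reads ``The verification is straightforward.'' Your CRT split into the mod-$3$ and mod-$4$ parts, followed by the parity case analysis for the mod-$4$ piece (including the key identity $A^2/a \equiv a \pmod 4$ in the both-odd case), is the natural way to carry this out and matches what the authors presumably had in mind.
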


\begin{proof}
The verification is straightforward.
\end{proof}

\begin{lem} \label{lem:Abaalpha}
Let $A,B,a \in \Z$ satisfy conditions \textup{(N1)}--\textup{(N2)}.  Then
\begin{equation}
\label{eq:ub}
|a|\ll X^{1/6}\quad\text{and} \quad A^2/|a|\ll X^{1/2}.
\end{equation}
\end{lem}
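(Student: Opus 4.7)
The plan is to rearrange the defining equation $\psi_{A,B}(a) = 0$ in two different ways and read off the bounds using condition (N2), which already gives $|A|\ll X^{1/3}$ and $|B|\ll X^{1/2}$. Note also that under (N1), if $a=0$ then $\psi_{A,B}(0)=-A^2=0$ forces $A=0$, a contradiction; so $a\ne 0$ and the expression $A^2/|a|$ makes sense.

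For the bound on $|a|$, I would write
\[ 3a^4 = A^2 - 6Aa^2 - 12Ba \]
and apply the triangle inequality to get
\[ 3|a|^4 \le A^2 + 6|A|\,a^2 + 12|B|\,|a|. \]
At least one of the three terms on the right must be at least a third of $3|a|^4$, so a short trichotomy finishes the job: if the $A^2$ term dominates, then $|a|^4 \ll X^{2/3}$; if the $|A|\,a^2$ term dominates, then $|a|^2 \ll |A| \ll X^{1/3}$; if the $|B|\,|a|$ term dominates, then $|a|^3 \ll |B| \ll X^{1/2}$. In all three cases, $|a| \ll X^{1/6}$.

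For the bound on $A^2/|a|$, I would return to the rearrangement (3.1), equivalently
\[ A^2 = 3a^4 + 6Aa^2 + 12Ba, \]
divide through by $a$ and take absolute values, yielding
\[ \frac{A^2}{|a|} \le 3|a|^3 + 6|A|\,|a| + 12|B|. \]
Substituting $|B|\ll X^{1/2}$, $|A|\ll X^{1/3}$, and the just-proven $|a|\ll X^{1/6}$ makes each of the three terms on the right $\ll X^{1/2}$, as claimed.

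There is no serious obstacle here: the whole argument amounts to two applications of the triangle inequality to two rearrangements of $\psi_{A,B}(a)=0$, combined with the elementary height bounds on $|A|$ and $|B|$. The only thing to watch is that the second bound genuinely depends on the first, so the two steps must be carried out in this order.
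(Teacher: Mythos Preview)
Your proof is correct and follows the same overall strategy as the paper: rearrange $\psi_{A,B}(a)=0$, apply the triangle inequality, and feed in the height bounds from (N2). Your second step is essentially identical to the paper's. For the first step the paper instead normalizes by setting $\alpha=a/X^{1/6}$, divides the identity by $|a|$ first, and obtains the explicit inequality
\[
3|\alpha|^3 \le \frac{4}{\sqrt{3}} + \frac{6|\alpha|}{4^{1/3}} + \frac{1}{4^{2/3}|\alpha|},
\]
from which one reads off $|\alpha|<11/8$. Your trichotomy is arguably cleaner, but the paper's version has the advantage of producing an explicit constant, and that constant $11/8$ is actually used later (in the computations of section~\ref{sec:computations}) to set loop bounds. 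Since the lemma as stated only asks for $\ll$, your argument fully suffices.
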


\begin{proof}
Let $\alpha \colonequals a/X^{1/6}$.  Since $|A|<4^{-1/3}X^{1/3}$, we have
$$
A^2/|a|<4^{-2/3}\abs{\alpha}^{-1}X^{1/2},~~|Aa|<4^{-1/3}\abs{\alpha} X^{1/2}.
$$
The inequality for $B$ and \eqref{eq:12B} imply that
$$
3|a|^3\le\frac4{3^{1/2}}X^{1/2}+6|Aa|+\frac{A^2}{|a|},
$$
so that 
\begin{equation} \label{eqn:th3sqp}
3\abs{\alpha}^3\le\frac4{3^{1/2}}+\frac{6\abs{\alpha}}{4^{1/3}}+\frac1{4^{2/3}\abs{\alpha}}.
\end{equation}
The inequality \eqref{eqn:th3sqp} fails for $\abs{\alpha}$ large---in fact, we have $\abs{\alpha}<11/8$---which
proves the first part of \eqref{eq:ub}.  To get the second part, note that the
first part  and condition (N2) imply that $|Aa|\ll X^{1/2}$.  And since
\eqref{eq:12B} implies that
$$
A^2/|a|\le 12|B|+6|Aa|+3|a|^3,
$$
we have $A^2/|a|\ll X^{1/2}$.
\end{proof}

\begin{proof}[Proof of Theorem \textup{\ref{thm:order}}]
We first prove the upper bound.  Every nonzero $a \in \Z$
can be written uniquely as $a=uv^2$, where $u \in \Z$ is squarefree and $v \in \Z_{>0}$.
Replacing $a=uv^2$, we see that $a\mid A^2$ if and only if $uv\mid A$.  Therefore $A=uvw$ with $w \in \Z$ arbitrary.  The inequalities in \eqref{eq:ub} imply that there exist $c_5,c_6>0$ such that
\begin{equation} \label{eqn:uvwineq}
\begin{aligned}
0<|u|v^2 \le c_5X^{1/6}~\hbox{ and }~ 
0<|u|w^2 \le c_6X^{1/2}.
\end{aligned}
\end{equation}
Thus, 
\begin{equation}
N(X) \leq \#\{(u,v,w) \in \Z^3 : \text{$u$ squarefree, $v>0$, and the inequalities \eqref{eqn:uvwineq} hold}\}. 
\end{equation}
For $X\ge2$, we have
\begin{equation}
\begin{aligned}
N(X)&\le\sum_{|u|v^2\le c_5X^{1/6}}\sum_{|u|w^2\le c_6X^{1/2}}1\ll \sum_{|u|v^2\le c_5X^{1/6}}\frac{X^{1/4}}{|u|^{1/2}}\\
&\le X^{1/4}\sum_{0<v\le c_5^{1/2}X^{1/12}}\,\sum_{|u|\le c_5X^{1/6}/v^2}\frac1{|u|^{1/2}}\\
&\ll X^{1/3}\sum_{0<v\le c_5^{1/2}X^{1/12}}\frac1v\ll X^{1/3}\log X.
\end{aligned}
\end{equation}

For the lower bound, we let $u,v,w$ range over positive, odd, squarefree numbers with $3 \mid w$ and let $a=uv^2$ and $A=uvw$ as in the previous paragraph; these ensure that conditions (B1)--(B3) hold, so by Lemma \ref{lem:condBs} we have $B \in \Z$.  Conditions \hyperref[N1]{(N1)} and \hyperref[N4]{(N4)} are also satisfied, and condition \hyperref[N3]{(N3)} is negligible.  
\begin{comment}
We now turn to the lower bound, using the same variables $u,v,w$ introduced above.  By choosing them all positive,
we will have $A=uvw>0$, so condition (iii) will be satisfied.  We must choose $u$ squarefree.
If we also choose $v,w$ as squarefree, then we are guaranteed that there is no prime $p$ with $p^4\mid A$.
Thus condition (iv) is taken care of.  To ensure \hyperref[B1]{(B1)}, we choose $w$ as a multiple of 3, and to
ensure \eqref{eq:mod4}, we choose $u,v,w$ all odd.  To sum up, we are choosing $u,v,w$ as positive, odd, squarefree
numbers with $3\mid w$.
\end{comment}
To ensure \hyperref[N2]{(N2)}, we choose 
\begin{equation}
\label{eq:3ineq}
v\le X^{1/24}, ~~uv^2<\frac12X^{1/6},~~w<uv^3.
\end{equation}
Then $A=uvw<u^2v^4<\frac14X^{1/3}$ so $\abs{4A^3} \leq X$.  Moreover,   
\begin{equation}
\begin{aligned}
-12B &= 3u^3v^6+6u^2v^3w-uw^2=3u^3v^6\left(1+2\frac{w}{uv^3}-\frac13\left(\frac{w}{uv^3}\right)^2\right) \\
&< 3\left(\frac{1}{2}X^{1/6}\right)^3 \frac83 = X^{1/2} 
\end{aligned}
\end{equation}
since $0<w/uv^3 \leq 1$ and the polynomial $1+2t-\frac13t^2$ on $[0,1]$ is positive and takes the maximum value $\frac83$.   Thus, all conditions are satisfied.  

We now count the choices for $u,v,w$ with the above conditions: we have
\begin{equation}
\begin{aligned}
N(X) &\ge \sum_{v\le X^{1/24}}\sum_{uv^2<\frac14X^{1/6}}\sum_{w<uv^3}1 
&\gg \sum_{v\le X^{1/24}}\sum_{u<\frac14X^{1/6}/v^2}uv^3.
\end{aligned}
\end{equation}
\begin{comment}
For a positive real number $t$, let $Q(t)$ denote the number of positive, odd, squarefree integers $m\le t$
and let $Q_3(t)$ denote the number of them which are divisible by 3.  An elementary argument shows that
as $t\to\infty$,
$$
Q(t)\sim \frac4{\pi^2}t,~~Q_3(t)\sim\frac{1.5}{\pi^2}t.
$$
It thus follows that
$$
N_0(X)\gg \sum_{v\le X^{1/24}}\sum_{u<\frac14X^{1/6}/v^2}uv^3.
$$
\end{comment}
By partial summation, the inner sum on $u$ is $\gg X^{1/3}/v$, and then another partial summation
gives that $N_0(X)\gg X^{1/3}\log X$, which completes the proof of the lower bound.
\end{proof}

\section{An asymptotic} \label{sec:asymp}

In this section, we prove an asymptotic for $N(X)$.  

We recall some notation introduced in the proof of Theorem \ref{thm:order}.
Let $(A,B,a) \in \Z^3$ satisfy \hyperref[N1]{(N1)}, so $a \neq 0$ and $B$ is determined by $A,a$ as in Lemma \ref{lem:condBs}.  Write
\begin{equation} \label{eqn:aA}
\begin{aligned}
a &=uv^2 \\
A &=uvw
\end{aligned}
\end{equation}
with $u \in \Z$ squarefree, $v \in \Z_{>0}$, and $w \in \Z_{\ne0}$.  Then
\begin{equation}  \label{eqn:twelveBinuvw}
12B=uw^2-6u^2v^3w-3u^3v^6. 
\end{equation}
We rewrite condition \hyperref[N4]{(N4)} and the conditions in Lemma \ref{lem:condBs} in terms of the quantities $u,v,w$ as follows.

\begin{lem} \label{lem:uvw12z}
Conditions \textup{\hyperref[B1]{(B1)}--\hyperref[B3]{(B3)}} and \textup{\hyperref[N4]{(N4)}} hold
%$3u^3v^6+6u^2v^3w-uw^2 \in 12\Z$ 
if and only if all of the following conditions hold:
\begin{enumerate}
    \item[(W1)] \label{W1} $uv\equiv w\pmod 2$;
    \item[(W2)] \label{W2} Not both $2^2\mid v$ and $2^4\mid w$ occur;
    \item[(W3)] \label{W3} Not all of $2\nmid u$, $2\,\|\,v$, and $2^3\,\|\,w$ occur;
    \item[(W4)] \label{W4} Not all of $2\mid u$, $2\,\|\,v$, and $2^4\mid w$ occur;
    \item[(W5)] \label{W5} $3\mid uw$;
    \item[(W6)] \label{W6} Not both $3\mid v$ and $3^4\mid uw$ occur; and
    \item[(W7)] \label{W7} For each prime $\ell>3$, not both $\ell\mid v$ and $\ell^3\mid w$ occur.
\end{enumerate}
\end{lem}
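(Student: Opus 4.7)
The verification is local at each prime, so we analyze how the conditions (B1)--(B3) and (N4) behave at $p=2$, $p=3$, and $p>3$ separately. A useful observation is the identity
\[ 12(B + u^3 v^6) = u(w - 3uv^3)^2, \]
obtained by completing the square in \eqref{eqn:twelveBinuvw}. Setting $z \colonequals w - 3uv^3$, we find $12B = uz^2 - 12 u^3 v^6$, and hence for each prime $\ell$,
\[ v_\ell(12B) = v_\ell(uz^2 - 12 u^3 v^6), \]
a quantity computable from the triple $(v_\ell(u), v_\ell(v), v_\ell(w))$; recall that $v_\ell(u) \in \{0,1\}$ since $u$ is squarefree.

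The first step is to show that (B1)--(B3) is equivalent to (W1) together with (W5). By \lemref{lem:condBs}, (B1)--(B3) is equivalent to $B \in \Z$, i.e., $12 \mid uz^2$. Since $z \equiv w - uv \pmod 2$ and $z \equiv w \pmod 3$, a short check on the parity of $u$ shows that $4 \mid uz^2$ is equivalent to $uv \equiv w \pmod 2$, giving (W1); and $3 \mid uz^2$ is equivalent to $3 \mid uw$, giving (W5).

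The second step translates (N4) into (W2), (W3), (W4), (W6), (W7). For each prime $\ell$, (N4) at $\ell$ requires that $\ell^4 \mid A = uvw$ and $\ell^6 \mid B$ do not hold simultaneously. Writing $\alpha, \beta, \gamma, \delta$ for the $\ell$-adic valuations of $u, v, w, z$ respectively, we have $v_\ell(A) = \alpha + \beta + \gamma$, $v_\ell(uz^2) = \alpha + 2\delta$, and $v_\ell(12 u^3 v^6) = v_\ell(12) + 3\alpha + 6\beta$. At $\ell > 3$, where $v_\ell(12) = 0$, a short enumeration (noting that $\delta = \min(\gamma, \alpha + 3\beta)$ when these are unequal) shows that the only obstruction to (N4) is $\beta \geq 1$ and $\gamma \geq 3$, which is (W7). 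At $\ell = 3$, where $v_3(12) = 1$, the same sort of enumeration, combined with (W5), yields (W6). At $\ell = 2$, where $v_2(12) = 2$, the bad configurations split into the three sub-cases recorded in (W2)--(W4), indexed by the parity of $u$ and the low-order valuations of $v$ and $w$.

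The main obstacle is the prime-$2$ analysis in this last step: because $v_2(12) = 2$ leaves room for cancellation between $uz^2$ and $12 u^3 v^6$, one must enumerate cases carefully to determine exactly when $v_2(12B) \geq 8$ and then verify that these correspond precisely to the sub-cases excluded by (W2)--(W4) (beyond the baseline $B \in \Z$ condition (W1)). The analyses at $\ell = 3$ and $\ell > 3$ are more direct, since such cancellation cannot lift $v_\ell(12B)$ enough to create new obstructions beyond those captured by (W6) and (W7).
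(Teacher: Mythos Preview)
Your proposal is correct and follows essentially the same route as the paper: a prime-by-prime case analysis. The paper's own proof is in fact just a one-line pointer to ``a tedious case-by-case analysis'' (or a finite computer check modulo $16$ and $81$ for the conditions at $2$ and $3$), so your outline is already more structured than what appears there. The completing-the-square identity $12(B+u^3v^6)=u(w-3uv^3)^2$ is a nice organizing device not mentioned in the paper; it cleanly separates the integrality condition $B\in\Z$ (giving (W1) and (W5)) from the minimality condition (N4), and makes the valuations of $12B$ easier to read off. Your identification of the prime-$2$ case as the delicate one is accurate, and the sketches at $\ell=3$ and $\ell>3$ are sound. If you wanted to make this a complete proof rather than an outline, the only remaining work is to actually carry out the enumerations you allude to---particularly the $2$-adic one establishing that (W2)--(W4) are exactly the excluded configurations---but this is precisely the ``tedious'' part the paper also omits.
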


\begin{proof}
This lemma can be proven by a tedious case-by-case analysis.  Alternatively, the conditions \hyperref[B1]{(B1)}--\hyperref[B3]{(B3)} are determined by congruence conditions modulo $16$ and $81$, so we may also just loop over the possibilities by computer.
\end{proof}

\begin{lem} \label{lem:localconst}
The proportion among $(u,v,w)$ (with $u$ squarefree) satisfying the conditions \textup{\hyperref[W1]{(W1)}}--\textup{\hyperref[W7]{(W7)}} is $(4\zeta(4))^{-1}$.  
\end{lem}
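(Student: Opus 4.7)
The plan is to exploit the prime-by-prime locality of all of (W1)--(W7) and of squarefreeness of $u$, then compute the resulting Euler product.

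Each of the conditions depends only on the $\ell$-adic valuations (equivalently, congruences modulo a bounded power of $\ell$) of $u,v,w$ at a single prime $\ell$: (W1)--(W4) at $\ell=2$, (W5)--(W6) at $\ell=3$, and (W7) at a generic prime $\ell>3$. A standard density argument (Chinese Remainder Theorem at the finitely many bounded primes and a tail estimate for the infinite product defining squarefreeness) shows that the natural density $D$ of triples $(u,v,w)\in\Z^3$ with $u$ squarefree and all (W1)--(W7) satisfied factors as an Euler product $D=\prod_\ell P_\ell$, where $P_\ell$ is the local $\ell$-adic density of $(u,v,w)\in\Z_\ell^3$ satisfying $\ell^2\nmid u$ together with the local conditions at $\ell$. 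Since the density of squarefree $u$ is $1/\zeta(2)$, the proportion claimed in the lemma equals $\zeta(2)\cdot D$.

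For $\ell>3$ the only local condition is (W7), which is independent of $u$, so $P_\ell=(1-\ell^{-2})(1-\ell^{-4})$. At $\ell=3$ I would stratify by $(a,b,c):=(v_3(u),v_3(v),v_3(w))$ with $a\in\{0,1\}$ and rewrite (W5)--(W6) as $a+c\geq 1$ and \emph{not} both $b\geq 1$ and $a+c\geq 4$; summing the $\ell$-adic measure $2\cdot 3^{-(a+b+c+3)}$ over the allowed region, split by whether $a=0$ or $a=1$, gives $P_3=\tfrac{1}{2}(1-3^{-2})(1-3^{-4})$. At $\ell=2$ the analogous calculation with $(a,b,c):=(v_2(u),v_2(v),v_2(w))$ uses (W1) to force the parity constraint ``$(a\geq 1$ or $b\geq 1)$ if and only if $c\geq 1$'', while (W2)--(W4) exclude three patterns that are \emph{pairwise disjoint} (they disagree on either $b$ or $a$); a case analysis on $a$ combined with disjoint inclusion--exclusion yields $P_2=\tfrac{1}{2}(1-2^{-2})(1-2^{-4})$. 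Multiplying the local factors gives
$$D=\tfrac{1}{4}\prod_\ell (1-\ell^{-2})(1-\ell^{-4}) = \frac{1}{4\zeta(2)\zeta(4)},$$
and hence the proportion equals $\zeta(2)\cdot D = 1/(4\zeta(4))$, as claimed.

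The main obstacle is the bookkeeping at $\ell=2$: the three exclusion patterns of (W2)--(W4) must be verified to be pairwise disjoint, and the parity constraint (W1) must be handled without double-counting the boundary cases where $b=0$ or $c=0$. The striking ``halving'' factors of $\tfrac{1}{2}$ that appear at both $\ell=2$ and $\ell=3$ (and nowhere else) arise from the additional divisibility constraints (W1) and (W5), and explain the absence of the factor $\zeta(2)$ in the final answer $1/(4\zeta(4))$ compared to the naive guess $1/(4\zeta(2)\zeta(4))$ one would get from the generic local factor at every prime.
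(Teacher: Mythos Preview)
Your argument is correct and follows the same local-factorization strategy as the paper: both compute an $\ell$-local proportion and multiply. The only difference is bookkeeping---the paper works conditionally on $u$ squarefree throughout (obtaining $15/32$ at $\ell=2$, $40/81$ at $\ell=3$, and $1-\ell^{-4}$ for $\ell>3$), whereas you fold the constraint $\ell^2\nmid u$ into each $P_\ell$ and undo it globally via the factor $\zeta(2)$; indeed $P_\ell/(1-\ell^{-2})$ reproduces the paper's local factors. One small slip: the cell measure in $\Z_3^3$ should be $8\cdot 3^{-(a+b+c+3)}$ (three factors of $1-\tfrac13$), not $2\cdot 3^{-(a+b+c+3)}$, though your claimed values $P_3=\tfrac12(1-3^{-2})(1-3^{-4})$ and $P_2=\tfrac12(1-2^{-2})(1-2^{-4})$ are correct.
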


\begin{proof}
For the conditions \hyperref[W1]{(W1)}--\hyperref[W6]{(W6)}, we just count residue classes (as in Lemma \ref{lem:uvw12z}): we find proportions $15/32$ for conditions \hyperref[W1]{(W1)}--\hyperref[W4]{(W4)} and $40/81$ for \hyperref[W5]{(W5)}--\hyperref[W6]{(W6)}.  For condition \hyperref[W7]{(W7)}, the proportion of cases where $\ell\mid v$ and $\ell^3\mid w$ is $1/\ell^4$, thus the correction factor is
$$
\prod_{\ell>3}\left(1-\frac1{\ell^4}\right)=\frac{16}{15}\cdot\frac{81}{80}\cdot\frac1{\zeta(4)} = \frac{27}{25\zeta(4)}.
$$
Thus the total proportion is 
\[ 
\frac{15}{32} \cdot \frac{40}{81} \cdot \frac{27}{25\zeta(4)} = \frac{1}{4\zeta(4)}. \qedhere \]
\end{proof}

Let $X>0$, and suppose $(A,B,a)$ is counted by $N(X)$.  Define $\alpha,\beta \in \R_{>0}$ by 
\begin{equation} \label{eqn:auvwxalpha}
\begin{aligned}
a = uv^2 &= \alpha X^{1/6}, \\
w &=\beta uv^3.
\end{aligned}
\end{equation}
(The quantity $\alpha$ arose in the proof of Lemma \ref{lem:Abaalpha}.)  
Moreover, define the functions
\begin{equation}
\begin{aligned}
f(\beta) &\colonequals \frac1{2^{1/3}|\beta|^{1/2}}, \\
g(\beta) &\colonequals \frac{4^{1/3}}{3^{1/2}|1+2\beta-\frac13\beta^2|^{1/3}}, \\
h(\beta) &\colonequals \min\{f(\beta),g(\beta)\}.
\end{aligned}
\end{equation}

\begin{center}
%\begin{overpic}[width=0.6\textwidth,grid,tics=10]{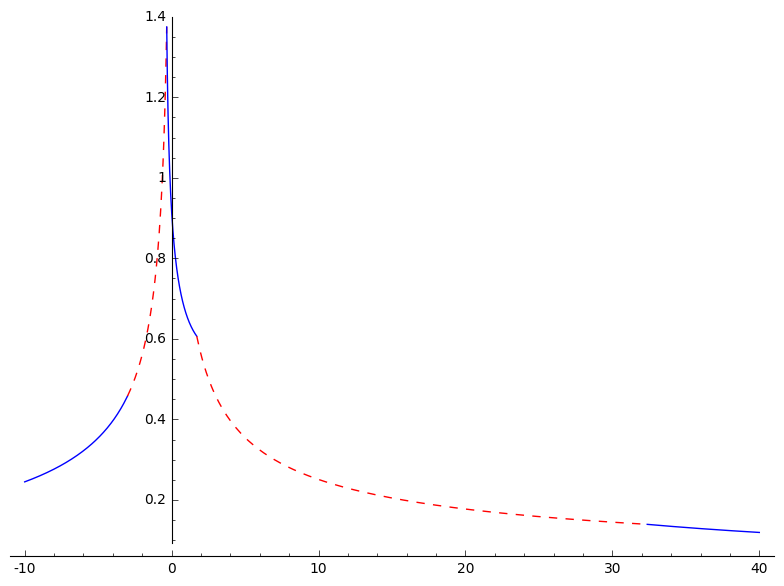}
\begin{overpic}[width=0.8\textwidth]{h_alternating.png}
 \put (23,70){$h(\beta)$}
 \put (100,3){$\beta$}
 \put (15,0){$\beta_4$} \put (16,3){$\shortmid$}
 \put (19.5,0){$\beta_3$} \put (20.5,3){$\shortmid$}
 \put (24,0){$\beta_2$} \put (25,3){$\shortmid$}
 \put (81,0){$\beta_1$} \put (82,3){$\shortmid$}
\end{overpic}
\end{center}
The transition points for the piecewise function $h(\beta)$ occur at
\begin{equation} 
\beta_1 \colonequals 32.37198796\dots,~~ \beta_2 \colonequals 1.71119188\dots,~~\beta_3 \colonequals -\frac13,~~\beta_4 \colonequals -3;
\end{equation}
the transition points $\beta_1,\beta_2$ are algebraic numbers.  Then $h(\beta)=g(\beta)$ on the intervals $(-\infty,\beta_4)$, $(\beta_3,\beta_2)$, and $(\beta_1,\infty)$
 and $h(\beta)=f(\beta)$ on the complementary intervals $(\beta_4,\beta_3)$ and $(\beta_2,\beta_1)$.  

 % The function $h$ i
 \begin{comment}
 Note that $h$ is continuous.
 Let $h_1$ be $h|_{(-\infty,-1/3]}$ and let $h_2=h|_{[-1/3,\infty)}$.  Then $h_1$ is strictly increasing
 and $h_2$ is strictly decreasing.  Letting $j_1,j_2$ be the inverses of $h_1,h_2$, respectively, we
 have for any real number $t\in(0,h(-1/3)]$ that 
 \begin{equation}
     \label{eq:j1j2}
 \{\beta\in\RR:h(\beta)\ge t\}=[j_2(t),j_1(t)].
  \end{equation}
\end{comment}
We compute numerically that
\begin{equation}
\label{eq:c0eval}
    c_0 \colonequals \int_{-\infty}^{\infty}h(\beta)^2\,d\beta=9.1812458638\dots.
\end{equation}

The relevance of these quantities (as well as their weighting) is made plain by the following lemma.
 
\begin{lem} \label{lem:alphabetah}
The triple $(u,v,w)$ satisfies \textup{\hyperref[N2]{(N2)}} if and only if 
\[ \abs{\alpha} \leq h(\beta). \]
\end{lem}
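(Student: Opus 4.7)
The plan is to rewrite each of the two height inequalities in (N2) as an inequality between $|\alpha|$ and a function of $\beta$, and then to intersect the two resulting conditions. The first step is to express $A$ and $B$ cleanly in terms of $\alpha$, $\beta$, and $X$. From $A=uvw$ together with the substitution $w=\beta uv^3$, one obtains $A=\beta(uv^2)^2=\beta\alpha^2 X^{1/3}$, which pins down $|A|$ up to an absolute constant. For $B$, substituting $w=\beta uv^3$ into \eqref{eqn:twelveBinuvw} produces exactly the factorization already exploited in the proof of Theorem \ref{thm:order}, namely
\[ -12B=3u^3v^6\left(1+2\beta-\frac{\beta^2}{3}\right); \]
since $u^3v^6=(uv^2)^3$, this gives $|B|$ as an explicit multiple of $|\alpha|^3 X^{1/2}\left|1+2\beta-\beta^2/3\right|$.

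With these expressions in hand, I would then translate each bound in turn. The condition $|4A^3|\le X$ collapses to $4|\beta|^3|\alpha|^6\le 1$, i.e., $|\alpha|\le f(\beta)$, with the constant $2^{1/3}$ emerging from taking sixth roots. A parallel computation for $|27B^2|\le X$ produces $|\alpha|\le g(\beta)$, with the constant $4^{1/3}/3^{1/2}$ emerging from $(3\sqrt{3})^{1/3}=3^{1/2}$. Since both bounds must hold simultaneously, the combined condition is by definition $|\alpha|\le\min\{f(\beta),g(\beta)\}=h(\beta)$, which is exactly what the lemma asserts.

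There is no substantive obstacle; the content of the lemma is a direct substitution once the right change of variables has been introduced. The only mild bookkeeping point is that $B$ and the quadratic $1+2\beta-\beta^2/3$ can each take either sign (the latter vanishing precisely at the points $\beta_3$ and $\beta_4$ that appear as transition values of $h$), which is the reason the definition of $g(\beta)$ carries an absolute value inside the cube root.
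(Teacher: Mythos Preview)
Your argument is correct and is essentially identical to the paper's proof: you express $A=\alpha^2\beta X^{1/3}$ and $-12B=3\alpha^3 X^{1/2}(1+2\beta-\beta^2/3)$, then translate the two inequalities in (N2) into $|\alpha|\le f(\beta)$ and $|\alpha|\le g(\beta)$ respectively, exactly as the paper does.

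One small factual slip in your closing remark: the quadratic $1+2\beta-\beta^2/3$ does \emph{not} vanish at $\beta_3=-1/3$ and $\beta_4=-3$; its roots are $3\pm 2\sqrt{3}$. The numbers $\beta_3$ and $\beta_4$ are instead points where $f(\beta)=g(\beta)$ (transition points of the minimum $h$). This does not affect the proof, since the absolute value in the definition of $g$ is needed regardless of where the quadratic changes sign.
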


\begin{proof}
Since $A=uvw=\beta u^2v^4=\alpha^2\beta X^{1/3}$, the first inequality in \hyperref[N2]{(N2)} is equivalent to
\begin{equation}
    \label{eq:ineq1}
|\alpha^2\beta| \leq 4^{-1/3}.
\end{equation}
In addition, we have
\[ -12B=3u^3v^6\left(1+2w/uv^3-\frac13\left(\frac{w}{uv^3}\right)^2\right)=3\alpha^3X^{1/2}\left(1+2\beta-\frac13\beta^2\right), \] 
so that the second inequality in \hyperref[N2]{(N2)} is equivalent to
\begin{equation}
    \label{eq:ineq2}
 \big|\alpha^3\big(1+2\beta-\frac13\beta^2\big)\big| \leq \frac4{3^{3/2}}.   
\end{equation}
The result then follows from \eqref{eq:ineq1} and \eqref{eq:ineq2}.
\begin{comment}
 we have
 \begin{equation}
     \label{eq:ineqmin}
     |\alpha|<\min\{f(\beta),g(\beta)\},~\hbox{ where }~
    f(\beta):=\frac1{2^{1/3}|\beta|^{1/2}},~g(\beta):=\frac{4^{1/3}}{3^{1/2}|1+2\beta-\frac13\beta^2|^{1/3}}.
 \end{equation}
 Let $h(\beta)=\min\{f(\beta),g(\beta)\}$.
\end{comment}
\end{proof} 

We then have the following first version of our main result.

\begin{thm} \label{thm:NXasymp}
We have
\[ N(X) \sim c_1 X^{1/3} \log X \]
where
\[ c_1 \colonequals \frac{c_0}{8\pi^2\zeta(4)}= 0.10743725502\ldots \]
and $c_0$ is defined in \eqref{eq:c0eval}.
\end{thm}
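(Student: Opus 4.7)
The plan is to stratify $N(X)$ by the value of $v$ and, for each $v$, approximate the inner count over $(u,w)$ by a Lebesgue integral, then sum the outcome in $v$ to extract the $\log X$ factor.

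By Lemmas~\ref{lem:uvw12z} and~\ref{lem:alphabetah}, $N(X)$ equals the number of triples $(u,v,w)\in \Z_{\neq 0}\times\Z_{>0}\times\Z_{\neq 0}$ with $u$ squarefree satisfying \hyperref[W1]{(W1)}--\hyperref[W7]{(W7)} together with the height condition $|u|v^2\leq h(w/(uv^3))X^{1/6}$; condition \hyperref[N3]{(N3)} excludes only a thin set. For each fixed $v$, set $R_v := \{(u,w)\in\R^2 : |u|v^2\leq h(w/(uv^3))X^{1/6}\}$. The change of variables $\alpha=uv^2/X^{1/6}$, $\beta=w/(uv^3)$ (with $v$ held fixed) has Jacobian giving $du\,dw=(X^{1/3}|\alpha|/v)\,d\alpha\,d\beta$, and since $R_v$ becomes $\{|\alpha|\leq h(\beta)\}$,
\[
\mathrm{vol}(R_v)=\frac{X^{1/3}}{v}\int_{-\infty}^{\infty}\int_{-h(\beta)}^{h(\beta)}|\alpha|\,d\alpha\,d\beta=\frac{X^{1/3}}{v}\int_{-\infty}^{\infty}h(\beta)^2\,d\beta=\frac{c_0 X^{1/3}}{v}.
\]

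Next, for each $v$ I would approximate the inner lattice count by $\rho(v)\,\mathrm{vol}(R_v)$, where $\rho(v)$ is the local density of pairs $(u,w)$ with $u$ squarefree satisfying \hyperref[W1]{(W1)}--\hyperref[W7]{(W7)}. This $\rho(v)$ is multiplicative in $v$, with local factor $1-\ell^{-3}$ at primes $\ell>3$ dividing $v$ (coming from the sieve in \hyperref[W7]{(W7)}) and fixed factors at $2$ and $3$ depending only on $v$ modulo $12$. By Lemma~\ref{lem:localconst} combined with the density $1/\zeta(2)$ for $u$ squarefree, the Dirichlet mean of $\rho$ is $\bar\rho=1/(4\zeta(2)\zeta(4))$, so a standard partial-summation argument from $\sum_{v\leq V}\rho(v)\sim\bar\rho V$ yields $\sum_{v\leq V}\rho(v)/v\sim\bar\rho\log V$. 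Summing over $1\leq v\leq V_{\max}$ with $V_{\max}\asymp X^{1/12}$ (the largest $v$ for which $R_v$ contains any integer $u\neq 0$),
\[
N(X)\sim c_0 X^{1/3}\sum_{v\leq V_{\max}}\frac{\rho(v)}{v}\sim c_0 X^{1/3}\cdot\bar\rho\cdot\frac{\log X}{12}=\frac{c_0}{48\,\zeta(2)\zeta(4)}X^{1/3}\log X=c_1 X^{1/3}\log X,
\]
using $8\pi^2\zeta(4)=48\,\zeta(2)\zeta(4)$.

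The main obstacle is justifying the inner-count approximation $T(v,X)\approx\rho(v)\,\mathrm{vol}(R_v)$ uniformly in $v\in[1,X^{1/12}]$. For small $v$ the region $R_v$ has dimensions of order $X^{1/6}$ in each coordinate, so standard lattice-point counting (with error $O(\text{modulus})$ per residue class) easily suffices; but as $v$ approaches $X^{1/12}$ the region becomes narrow in the $u$-direction (containing only $O(1)$ lattice values), and the per-$v$ boundary error could rival the main contribution. The remedy is to truncate: bound the contribution from $v>X^{1/12-\varepsilon}$ directly (showing it is $o(X^{1/3}\log X)$) and apply a precise lattice count on the complementary range. The interplay between the squarefree condition on $u$ and the sieve condition \hyperref[W7]{(W7)} at primes $\ell\mid v$ must be handled carefully, for example by Möbius decomposition of each constraint followed by error bounds uniform in $v$.
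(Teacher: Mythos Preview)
Your proposal is correct and reaches the same constant by the same mechanism—the $(u,v,w)$ parametrization, the height condition $|\alpha|\le h(\beta)$, and the extraction of $\log X$ from summing $1/v$ over $v\ll X^{1/12}$—but organizes the argument differently from the paper. The paper first ignores the local conditions to compute an unconditional count $N_0(X)$: it integrates over $\beta$ (so that the $w$-count contributes a factor $|u|v^3\,d\beta$), then sums over pairs $(u,v)$ with $|u|v^2\le h(\beta)X^{1/6}$, obtaining $N_0(X)\sim (c_0/2\pi^2)X^{1/3}\log X$; only afterward does it multiply by the global density $1/(4\zeta(4))$ from Lemma~\ref{lem:localconst}. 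You instead stratify by $v$, compute the two-dimensional area $\mathrm{vol}(R_v)=c_0X^{1/3}/v$ in $(u,w)$ directly, and fold the local conditions into a $v$-dependent density $\rho(v)$ whose mean you identify as $1/(4\zeta(2)\zeta(4))$. Your route is arguably more transparent about where the local conditions enter (some of \hyperref[W2]{(W2)}--\hyperref[W7]{(W7)} genuinely depend on the prime factorization of $v$), at the cost of having to verify that $\sum_{v\le V}\rho(v)\sim\bar\rho V$ rather than simply quoting Lemma~\ref{lem:localconst}. The paper's route is slightly cleaner for the error analysis, since integrating in $\beta$ first reduces the lattice count to one dimension in $w$ with an $O(1)$ error per $(u,v)$, whereas your two-dimensional count over $(u,w)$ has to contend with the unbounded shape of $R_v$ near $u=0$; you correctly flag this and the truncation at $v\le X^{1/12-\varepsilon}$ is the right remedy.
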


\begin{proof}
Via \eqref{eqn:aA}--\eqref{eqn:twelveBinuvw}, $N(X)$ counts $(u,v,w) \in \Z^3$ with $u$ squarefree, $v$ positive, $w \neq 0$, such that conditions \hyperref[N2]{(N2)}--\hyperref[N3]{(N3)} hold as well as the local conditions \hyperref[W1]{(W1)}--\hyperref[W7]{(W7)} (which implies \hyperref[N4]{(N4)}).  We may ignore condition \hyperref[N3]{(N3)} as negligible: for each choice of $u,v$ there are $O(1)$ choices of $w$ where \hyperref[N3]{(N3)} fails, subtracting at most $O(X^{1/6})$ from the count.  

We first show how to count triples $u,v,w$ satisfying \hyperref[N2]{(N2)}, not necessarily the local conditions, and define
\begin{equation} \label{eqn:N0X}
N_0(X) \colonequals \#\{(u,v,w) \in \Z^3 : \text{$u$ squarefree, $v>0$, and \hyperref[N2]{(N2)} holds}\}. 
\end{equation}
We suppress the reminder that $u$ is taken to be squarefree.  The number of triples with $w=0$ is negligible, so we ignore this condition.
% Letting $\beta_0=\infty$ and $\beta_5=-\infty$ our typical interval is $(\beta_{i+1},\beta_i)$
%for $i=0,\dots,4$.  

Let $X>0$.  For $(u,v,w)$ counted by $N_0(X)$, we organize by the value of $\beta=w/uv^3 \in \Q$.  Taking $\beta$ in an interval $I$ of length $s$ that does not contain a transition point in its interior, the integers $u,v$ are constrained by 
\[ \abs{a}=\abs{u}v^2<\abs{\alpha}X^{1/6} < h(\beta)X^{1/6} \] 
(with $h(\beta)$ minimal on $I$, taking left or right endpoint) by Lemma \ref{lem:alphabetah}.  Given $u,v$, we have $w=\beta uv^3 \in uv^3 I$ giving approximately $uv^3$ possible values of $w$.  Repeating this argument with Riemann sum estimates, we obtain
\begin{equation} \label{eqn:xn0x}
%\int_{\beta_{i+1}}^{\beta_i}\sum_{\substack{|u|v^2<h(\beta)X^{1/6}\\u\,{\rm squarefree}\\v>0}}|u|v^3\,d\beta.
N_0(X) \sim \int_{-\infty}^{\infty}\sum_{\substack{|u|v^2<h(\beta)X^{1/6}\\v>0}}|u|v^3\,\dD\beta
\end{equation}
as $X \to \infty$.  (For a more refined approach with an error term, see \eqref{eqn:N0Xestimateerror} below.)

We now evaluate this integral.  Recall that
\[ \sum_{\abs{u} \leq t} \abs{u} \sim \frac{6}{\pi^2} t^2; \]
% The sum of the absolute values of the squarefree integers in an interval $(-t,t)$ is $\frac6{\pi^2}t^2+O(t^{3/2})$.  
inputting this into \eqref{eqn:xn0x} and letting $X \to \infty$, we obtain
\begin{equation}
\begin{aligned}
&\int_{-\infty}^{\infty}\sum_{\substack{v^2<h(\beta)X^{1/6} \\ v>0}} v^3 \sum_{|u|<h(\beta)X^{1/6}/v^2} |u|\,\dD\beta \sim 
%\frac6{\pi^2}\int_{\beta_{i+1}}^{\beta_i}\sum_{v<h(\beta)^{1/2}X^{1/12}}\frac{h(\beta)^2X^{1/3}}{v^4}\cdot v^3\,d\beta
%=\frac1{2\pi^2}X^{1/3}\log X\int_{\beta_{i+1}}^{\beta_i}h(\beta)^2\,d\beta.
\frac6{\pi^2}\int_{-\infty}^{\infty}\sum_{v<h(\beta)^{1/2}X^{1/12}}v^3 \frac{h(\beta)^2X^{1/3}}{v^4}\,\dD\beta \\
&\qquad\qquad\qquad \sim \frac{6X^{1/3}}{\pi^2}\int_{-\infty}^{\infty}h(\beta)^2\int_{1}^{h(\beta)^{1/2}X^{1/12}} \frac{1}{v}\,\dD{v}\,\dD\beta \\
&\qquad\qquad\qquad = \frac{6X^{1/3}}{\pi^2}\int_{-\infty}^{\infty}h(\beta)^2\log(h(\beta)^{1/2}X^{1/12})\,\dD\beta \\
&\qquad\qquad\qquad \sim \frac1{2\pi^2}X^{1/3}\log X\int_{-\infty}^{\infty}h(\beta)^2\,\dD\beta = \frac{c_0}{2\pi^2} X^{1/3} \log X.
\end{aligned}
\end{equation}
% We compute that
 %$$
% \int_{\beta_{i+1}}^{\beta_i}h(\beta)^2\,d\beta=
 %\begin{cases}
 %1.701067731\dots,& \hbox{ if }~i=0,\\
% 1.852149070\dots,& \hbox{ if }~i=1,\\
% 1.250455020\dots,& \hbox{ if }~i=2,\\
% 1.384164748\dots,& \hbox{ if }~i=3,\\
% 2.993409322\dots,& \hbox{ if }~i=4.
% \end{cases}
%  $$  
%
%Let $c_0$ be the sum of these 5 numbers, so that $c_0=9.181245891\dots$.  We have
% We have
% \begin{equation}
%     \label{eq:c0}
% \int_{-\infty}^{\infty}\sum_{\substack{|u|v^2<h(\beta)X^{1/6}\\v>0}}|u|v^3\,d\beta\sim
% \frac{c_0}{2\pi^2}X^{1/3}\log X
% \end{equation}
% as $X\to\infty$.
% This is an asymptotic for $N_0(X)$, the number of ordered triples $A,B,a$ with $A,12B,a\in\ZZ$ that satisfy (i), (ii), (iii), but do not necessarily satisfy (iv).

Finally, we impose the local constraints \hyperref[W1]{(W1)}--\hyperref[W7]{(W7)}.  The first 6 of these are clear.  
To impose \hyperref[W7]{(W7)} note that
\[
\frac{27}{25\zeta(4)}=\prod_{\ell>3}\left(1-\frac1{\ell^4}\right)=\sum_{\gcd(d,6)=1}\frac{\mu(d)}{d^4}.
\]
The sum converges rapidly, in fact, for $Z>1$,
\[
\Bigg|\frac{27}{25\zeta(4)}-\sum_{\substack{\gcd(d,6)=1\\d\le Z}}\frac{\mu(d)}{d^4}\Bigg|\ll \frac1{Z^3}.
\]
Further, the proportion of triples $u,v,w$ with $d\mid v$ and $d^3\mid w$ for some $d>Z$ tends to 0
as $Z\to\infty$.  So, imposing \hyperref[W7]{(W7)} introduces the factor $27/(25\zeta(4))$ as in 
 Lemma \ref{lem:localconst}.  We conclude that
\[ N(X) \sim \frac{1}{4\zeta(4)}N_0(X) \sim c_1 X^{1/3}\log X \]
as $X \to \infty$, as claimed.
\begin{comment}
We now introduce the 7 arithmetic constraints on $u,v,w$ listed above that will ensure that $B\in\ZZ$ and
also condition (iv).  

We incorporate the local conditions (1)--(7) into \eqref{eq:c0}.  \jv{Is it clear that sieving with these local conditions behaves well with respect to the bounds?  For any fixed set of conditions there is no problem, but there are infinitely many.  (I realize this is probably `just' a technical issue.}
With the constant
$$
c_1=\frac{c_0}{2\pi^2}\cdot\frac{15}{32}\cdot\frac{40}{81}\cdot\frac{486}{5\pi^4}=\frac{45c_0}{4\pi^6}=0.1074372550\dots,
$$
we have
$$
N(X)\sim c_1X^{1/3}\log X~\hbox{ as }~X\to\infty.
$$
\end{comment}
\end{proof}

\section{Secondary term} \label{sec:secterm}

In this section, we work on a secondary term for $N(X)$ (giving a tertiary term for $N_3(X)$).  

We start by explaining how this works for the function $N_0(X)$ defined in \eqref{eqn:N0X}, namely, the triples $(u,v,w) \in \Z^3$ such that $u$ is squarefree, $v>0$, and $\abs{\alpha} \leq h(\beta)$ where $\alpha,\beta$ are defined by \eqref{eqn:auvwxalpha}.  We discuss the modifications to this approach for $N(X)$ below.  

We begin by working out an analog of Euler's constant for the squarefree harmonic series.
% \carlp{This may already be in the literature.}

\begin{lemma}
 \label{lem:harm}
For real numbers $x\ge1$ we have
$$
\sum_{\substack{0<u\le x\\\text{$u$ squarefree}}}\frac1u=\frac1{\zeta(2)}\log x+\gamma_0+O(x^{-1/2}\log x)=1.0438945157\dots,
$$
where 
\begin{equation} \label{defn:gamma0} 
\gamma_0 \colonequals \frac{\gamma\zeta(2)-2\zeta'(2)}{\zeta(2)^2} 
\end{equation}
and $\gamma$ is Euler's constant.
\end{lemma}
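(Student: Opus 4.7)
The plan is to use Möbius inversion to detect squarefreeness, swap the order of summation, and then apply the standard asymptotic for the harmonic sum. Write $\mathbf{1}_{\text{sqfree}}(u) = \sum_{d^2 \mid u} \mu(d)$, so that
\[
\sum_{\substack{0<u\le x\\u\text{ sqfree}}}\frac{1}{u} = \sum_{d \le \sqrt{x}} \mu(d) \sum_{\substack{u \le x\\ d^2 \mid u}} \frac{1}{u} = \sum_{d \le \sqrt{x}} \frac{\mu(d)}{d^2} \sum_{m \le x/d^2} \frac{1}{m},
\]
where the outer sum truncates at $\sqrt{x}$ because $d > \sqrt{x}$ forces $x/d^2 < 1$.

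Next I would insert the classical estimate $\sum_{m \le y}1/m = \log y + \gamma + O(1/y)$ (valid for $y \ge 1$) into the inner sum, so that
\[
\sum_{\substack{0<u\le x\\u\text{ sqfree}}}\frac{1}{u} = \sum_{d\le\sqrt{x}}\frac{\mu(d)}{d^2}\bigl(\log x - 2\log d + \gamma\bigr) + O\!\left(\frac{1}{x}\sum_{d \le \sqrt{x}} 1\right).
\]
The error term is $O(x^{-1/2})$, which is absorbed into the claimed error. To handle the three main sums, I would extend each one from $d \le \sqrt{x}$ to all $d \ge 1$, using the identifications
\[
\sum_{d=1}^\infty \frac{\mu(d)}{d^2} = \frac{1}{\zeta(2)}, \qquad \sum_{d=1}^\infty \frac{\mu(d)\log d}{d^2} = \frac{\zeta'(2)}{\zeta(2)^2},
\]
the second coming from differentiating $1/\zeta(s) = \sum \mu(d)/d^s$ at $s=2$.

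Controlling the tails beyond $\sqrt{x}$ is straightforward: $\sum_{d > \sqrt{x}} 1/d^2 \ll x^{-1/2}$, and by partial summation $\sum_{d > \sqrt{x}} (\log d)/d^2 \ll x^{-1/2}\log x$. The $\log x$ tail (multiplied by $\sum_{d > \sqrt{x}} \mu(d)/d^2 \ll x^{-1/2}$) contributes $O(x^{-1/2}\log x)$, which matches the error bound in the statement and is in fact the dominant source of error. Collecting the main terms gives
\[
\frac{\log x}{\zeta(2)} + \gamma \cdot \frac{1}{\zeta(2)} - 2 \cdot \frac{\zeta'(2)}{\zeta(2)^2} = \frac{\log x}{\zeta(2)} + \frac{\gamma\zeta(2)-2\zeta'(2)}{\zeta(2)^2},
\]
which identifies the constant as $\gamma_0$ as defined in \eqref{defn:gamma0}. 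A direct numerical evaluation using $\gamma \approx 0.5772$, $\zeta(2) = \pi^2/6$, and $\zeta'(2) \approx -0.9375$ then yields the decimal value $\gamma_0 = 1.0438945157\dots$. There is no real obstacle here; the only point requiring minor care is arranging the tail estimates so that the final error is exactly $O(x^{-1/2}\log x)$ rather than something worse like $O(x^{-1/2}(\log x)^2)$, which is handled by estimating the $\log d$ tail with a single application of partial summation.
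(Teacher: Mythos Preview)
Your proof is correct and follows essentially the same approach as the paper: both detect squarefreeness via $\sum_{d^2\mid u}\mu(d)$, swap the order of summation, insert the harmonic-sum asymptotic $\log y+\gamma+O(1/y)$, and then extend the finite sums $\sum_{d\le\sqrt{x}}\mu(d)/d^2$ and $\sum_{d\le\sqrt{x}}\mu(d)\log d/d^2$ to infinity with tail bounds $O(x^{-1/2})$ and $O(x^{-1/2}\log x)$ respectively. The handling of error terms and the identification of $\gamma_0$ match the paper exactly.
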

\begin{proof}
The integer variables $u,v,d$ in this proof are positive.  
We have
$$
\sum_{\substack{u\le x\\u~{\rm squarefree}}}\frac1u=\sum_{u\le x}\sum_{d^2\,|\,u}\frac{\mu(d)}u
=\sum_{d\le x^{1/2}}\frac{\mu(d)}{d^2}\sum_{v\le x/d^2}\frac1v
=\sum_{d\le x^{1/2}}\frac{\mu(d)}{d^2}\left(\log\Big(\frac x{d^2}\Big)+\gamma+O\Big(\frac{d^2}x\Big)\right).
$$
The $O$-terms add to $O(x^{-1/2})$.  Since
$$
\sum_{d\le x^{1/2}}\frac{\mu(d)}{d^2}=\sum_{d}\frac{\mu(d)}{d^2}-\sum_{d> x^{1/2}}\frac{\mu(d)}{d^2}
=\frac1{\zeta(2)}+O(x^{-1/2})
$$
and 
$$
\sum_{d\le x^{1/2}}\frac{2\mu(d)\log d}{d^2}=
\sum_{d}\frac{2\mu(d)\log d}{d^2}-\sum_{d> x^{1/2}}\frac{2\mu(d)\log d}{d^2}=\frac{2\zeta'(2)}{\zeta(2)^2}+O\left(x^{-1/2}\log x\right),
$$
the result follows.
\end{proof}

\begin{thm} \label{thm:N0Xsecond}
We have
\[ N_0(X) = \frac{c_0}{2\pi^2}X^{1/3}\log X+c_6X^{1/3}+O(X^{7/24}) \]
where $c_0$ is defined in \eqref{eq:c0eval} and
\begin{equation} \label{eqn:c6}
c_6 \colonequals \left(\frac{\gamma_0}2+\frac{6\gamma}{\pi^2}-\frac3{2\pi^2}\right)c_0 
+ \frac{3}{\pi^2}\int_{-\infty}^\infty h(\beta)^2\log h(\beta)\,\dD{\beta} = 1.12042819875\dots 
\end{equation}
where $\gamma_0$ is defined in \eqref{defn:gamma0} and $\gamma$ is Euler's constant.
\end{thm}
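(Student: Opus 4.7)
The plan is to follow the template of the proof of Theorem \ref{thm:NXasymp}, tracking both leading and secondary contributions. First, for fixed squarefree $u \ne 0$ and $v \ge 1$, the set $\{\beta \in \RR : h(\beta) \ge |u|v^2/X^{1/6}\}$ is a union of a uniformly bounded number of intervals (by the structure of $h$ described before \eqref{eq:c0eval}), so the number of $w \in \Z$ with $w/(uv^3)$ in this set is $|uv^3| M(|u|v^2/X^{1/6}) + O(1)$, where $M(t) \colonequals \mathrm{meas}\{\beta : h(\beta) \ge t\}$. Summing the $O(1)$ errors over admissible $(u,v)$ gives $O(X^{1/6})$. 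Writing $M$ as an integral and exchanging summation and integration yields
\[
N_0(X) = \int_{-\infty}^{\infty} S(h(\beta)X^{1/6})\,\dD\beta + O(X^{1/6}), \quad S(T) \colonequals \sum_{\substack{u,v\in\Z,\ v\ge 1,\ u\,\text{sqf}\\ |u|v^2 \le T}} |u|v^3.
\]

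The heart of the argument is an asymptotic of the form $S(T) = \tfrac{T^2 \log T}{2\zeta(2)} + 2C_1 T^2 + O(T^{7/4})$ for an explicit $C_1$ satisfying $2C_1 = \tfrac{\gamma_0}{2} + \tfrac{6\gamma}{\pi^2} - \tfrac{3}{2\pi^2}$. Using $u \mapsto -u$ symmetry, write $S(T) = 2\sum_{u\le T,\,\text{sqf}} u \sum_{v\le\sqrt{T/u}} v^3$, then apply the exact identity $\sum_{v\le N}v^3 = \lfloor N\rfloor^2(\lfloor N\rfloor + 1)^2/4$ and the expansion $\lfloor N\rfloor = N - \{N\}$ to separate a polynomial part $N^4/4 + N^3/2 + N^2/4$ from oscillatory coefficients in $\{N\}$. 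Plugging in $N = \sqrt{T/u}$, the dominant piece $T^2/(4u)$ is summed by Lemma \ref{lem:harm}, producing $\tfrac{T^2\log T}{4\zeta(2)} + \tfrac{\gamma_0 T^2}{4}$. Sub-leading polynomial pieces produce $\sum_{u\le T,\,\text{sqf}} u^{-1/2}$, handled via partial summation against $\sum_{u\le t,\,\text{sqf}}1 = t/\zeta(2) + O(\sqrt t)$; combining these (along with the identity $\sum \mu(d)\log d/d^2 = \zeta'(2)/\zeta(2)^2$) assembles the constant $C_1$.

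Substituting $T = h(\beta)X^{1/6}$ and splitting $\log T = \log h(\beta) + \tfrac{1}{6}\log X$, integration over $\beta$ recovers the main term $\tfrac{c_0}{2\pi^2}X^{1/3}\log X$ (using $\int h^2\,\dD\beta = c_0$ and $1/\zeta(2) = 6/\pi^2$) and the $X^{1/3}$-coefficient
\[
\frac{1}{2\zeta(2)}\int h^2 \log h\,\dD\beta + 2C_1\, c_0 = \frac{3}{\pi^2}\int h^2 \log h\,\dD\beta + \Bigl(\frac{\gamma_0}{2} + \frac{6\gamma}{\pi^2} - \frac{3}{2\pi^2}\Bigr) c_0 = c_6.
\]
The $O(T^{7/4})$ error integrates to $O(X^{7/24})$ because $\int h^{7/4}\,\dD\beta < \infty$, using $h(\beta) \ll |\beta|^{-2/3}$ as $|\beta|\to\infty$.

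The principal obstacle is establishing the $O(T^{7/4})$ error for $S(T)$: the naive expansion yields only $O(T^2)$, exactly the order of the secondary term. The necessary savings must come from cancellation in the fractional-part sums $\sum_{u\le T,\,\text{sqf}} u^{-1/2}\{\sqrt{T/u}\}$ and their cousins, a Dirichlet-divisor-type problem. One path is van der Corput's method for exponential sums over squarefree integers; a parallel path is Perron's formula applied to $F(s) = \zeta(2s-3)\zeta(s-1)/\zeta(2s-2)$, which has a double pole at $s=2$, followed by a contour shift past this pole using standard zeta-function subconvexity bounds.
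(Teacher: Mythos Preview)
Your reduction to evaluating $S(T)=\sum_{|u|v^2\le T}|u|v^3$ and then integrating over $\beta$ is exactly the paper's setup, and your handling of the initial $O(X^{1/6})$ from the $w$-count is correct. The divergence is in how you estimate $S(T)$, and there is a genuine gap.

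Your route---expand $\sum_{v\le N}v^3$ exactly and separate the polynomial part $N^4/4+N^3/2+N^2/4$ from terms in $\{N\}$---does not isolate the secondary constant. The leading fractional-part contribution $-2T^{3/2}\sum_{u\le T}^{\mathrm{sqf}}u^{-1/2}\{\sqrt{T/u}\}$ has size of order $T^2$ (since $\{\cdot\}$ averages to $1/2$), so it contributes to $C_1$, not merely to the error. Concretely, your polynomial part alone gives the $T^2$-coefficient $\gamma_0/2+12/\pi^2+3/\pi^2=\gamma_0/2+15/\pi^2$, not the claimed $\gamma_0/2+6\gamma/\pi^2-3/(2\pi^2)$; Euler's $\gamma$ has nowhere to enter in that computation. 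So the assertion that ``combining these \ldots\ assembles the constant $C_1$'' is incorrect. You then defer the actual work to van~der~Corput or Perron without carrying either out; Perron with $F(s)=\zeta(s-1)\zeta(2s-3)/\zeta(2s-2)$ would indeed produce both $C_1$ (with $\gamma$ coming from the Laurent expansion at the double pole $s=2$) and an acceptable error, but this is a substantial calculation you have not done.

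The paper avoids all of this with the elementary Dirichlet hyperbola method. Set $H=T^{1/4}$ and write $S=S_1+S_2-S_3$, where $S_1$ restricts to $|u|\le H^2$, $S_2$ to $v\le H$, and $S_3$ to both. In $S_1$ sum over $v$ first via $\sum_{v\le M}v^3=M^4/4+O(M^3)$, then over $u$ via Lemma~\ref{lem:harm}; in $S_2$ sum over $u$ first via $\sum_{|u|\le M}^{\mathrm{sqf}}|u|=(6/\pi^2)M^2+O(M^{3/2})$, then over $v$ via $\sum_{v\le H}1/v=\log H+\gamma+O(1/H)$ (this is where $\gamma$ appears); $S_3$ is a direct product. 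Each error is $O(T^{7/4})$ by the choice of $H$, the constants assemble to the stated $2C_1$, and no exponential-sum or contour-shift machinery is needed.
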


\begin{proof}
%Note that if $h(\beta)<X^{-1/6}$, then there
%are no choices for $u,v,w$ and the contribution to the integral in \eqref{eq:c0} is indeed 0.
%Thus, we may assume that $|\beta|\ll X^{1/4}$.  The difference from the infinite integral is $O(X^{-1/12}$,
%which when multiplied by $X^{1/3}\log X$ gives an error of $O(X^{1/4}\log X)$.
We return to the derivation of the integral expression \eqref{eqn:xn0x} and  consider the contribution of a single term $a=uv^2$.  With $\alpha=a/X^{1/6}$,
the contribution of $a$ to the integral is
\begin{equation} \label{eqn:ddbetauv}
\int_{h(\beta)\ge|\alpha|}|u|v^3\,\dD\beta=|u|v^3\int_{h(\beta)\ge|\alpha|}\,\dD\beta.
\end{equation}

 Note that $h$ is continuous.
 Let $h_1 \colonequals h|_{(-\infty,-1/3]}$ and $h_2 \colonequals h|_{[-1/3,\infty)}$.  Then $h_1$ is strictly increasing and $h_2$ is strictly decreasing.  Letting $j_1,j_2$ be the inverses of $h_1,h_2$, respectively, we have for any $t\in(0,h(-1/3)]$ that 
 \begin{equation}
     \label{eq:j1j2}
 \{\beta\in\RR:h(\beta)\ge t\}=[j_2(t),j_1(t)].
  \end{equation}
Plugging \eqref{eq:j1j2} into the integral \eqref{eqn:ddbetauv}, we obtain $j_1(|\alpha|)-j_2(|\alpha|)$.  

For a choice of $a=uv^2$, we count the number of nonzero integers $w$ with $w/(\abs{u}v^3) \in[j_2(|\alpha|),j_1(|\alpha|)]$: this is equal to
\[ \abs{u}v^3(j_1(|\alpha|)-j_2(|\alpha|))+O(1). \]  
So, the error when considering the integral in \eqref{eqn:xn0x} is $O(X^{1/6})$, i.e.,
\begin{equation} \label{eqn:N0Xestimateerror}
N_0(X) = 
\int_{-\infty}^\infty\sum_{\substack{|u|v^2\le h(\beta)X^{1/6}\\v>0}}|u|v^3\,\dD\beta+O(X^{1/6}).
\end{equation}

We next consider the evaluation of the integrand 
\begin{equation}
S \colonequals \sum_{\substack{|u|v^2<h(\beta)X^{1/6}\\v>0}}|u|v^3
\end{equation}
(with the continued understanding that $u$ is squarefree).  
Let $H(\beta) \colonequals h(\beta)^{1/4}X^{1/24}$, so that if $\abs{u} v^2\le h(\beta)X^{1/6}$,
then either $\abs{u} \le H^2$ or $v\le H$.
Let $S_1$ be the contribution to the integrand
when $|u|\le H^2$, let $S_2$ be the contribution when $v\le H$, and let $S_3$ be the contribution
when both $|u|\le H^2$ and $v\le H$.  Then 
$$
S=S_1+S_2-S_3.
$$

Using that $\sum_{0<v\le t}v^3=\frac14t^4+O(t^3)$,
for a given value of $u$ with $|u|\le H^2$, we have
$$
\sum_{v\le h(\beta)^{1/2}X^{1/12}/|u|^{1/2}}|u|v^3=\frac14|u|\left(\frac{h(\beta)^2X^{1/3}}{|u|^2}+O\Big(\frac{h(\beta)^{3/2}X^{1/4}}{|u|^{3/2}}\Big)\right).
$$
Summing this over squarefree numbers $u$ with $|u|\le H^2$ and using Lemma \ref{lem:harm}, we get
\begin{equation} \label{eqn:S1}
\begin{aligned}
S_1=&\frac14h(\beta)^2X^{1/3}\cdot2\left(\frac6{\pi^2}\log H^2+\gamma_0\right)+O\big(h(\beta)^2X^{1/3}H^{-2}\log H+h(\beta)^{3/2}X^{1/4}H\big)\\
=&\frac1{4\pi^2}h(\beta)^2X^{1/3}\log X+h(\beta)^2\left(\frac12\gamma_0+\frac3{2\pi^2}\log h(\beta)\right)X^{1/3}\\
&\qquad+O\big(h(\beta)^{3/2}X^{1/4}\log X\big)+O\big(h(\beta)^{7/4}X^{7/24}\big).
\end{aligned}
\end{equation}

Next we consider $S_2$.  For a given value of $v\le H$, we have
\begin{equation}
\sum_{\substack{|u|\le h(\beta)X^{1/6}/v^2}}|u|v^3
=2\cdot\frac12\cdot\frac6{\pi^2}h(\beta)^2X^{1/3}v^{-1}+O\Big(h(\beta)^{3/2}X^{1/4}\Big),
\end{equation}
using that the number of squarefree numbers up to a bound $x$ is $\frac6{\pi^2}x+O(x^{1/2})$ and partial summation.
Summing for $v\le H$ we get
\begin{equation} \label{eqn:S2}
\begin{aligned}
    S_2&=\frac6{\pi^2}h(\beta)^2X^{1/3}\Big(\frac1{24}\log X+\gamma+\frac14\log h(\beta)+O(1/H)\Big)
    +O(h(\beta)^{3/2}X^{1/4}H)\\
    &=\frac1{4\pi^2}h(\beta)^2X^{1/3}\log X+\frac6{\pi^2}h(\beta)^2\Big(\gamma+\frac14\log h(\beta)\Big)X^{1/3}
    +O(h(\beta)^{7/4}X^{7/24}).
\end{aligned}
\end{equation}

Finally, for $S_3$ we have
\begin{equation} \label{eqn:S3}
\begin{aligned}
S_3&=\left(\frac6{\pi^2}H^4+O(H^{3})\right)
\left(\frac14H^4+O(H^{3})\right)=\frac3{2\pi^2}H^8+O(H^{7})\\
&=\frac3{2\pi^2}h(\beta)^2X^{1/3}+O(h(\beta)^{7/4}X^{7/24}).
\end{aligned}
\end{equation}

Since $S=S_1+S_2-S_3$, combining \eqref{eqn:S3}, \eqref{eqn:S2}, and \eqref{eqn:S3} we obtain
\begin{equation} \label{eqn:S}
\begin{aligned}
S=&\frac{h(\beta)^2}{2\pi^2}X^{1/3}\log X
+h(\beta)^2\Big(\frac{\gamma_0}2+\frac{6\gamma}{\pi^2}+\frac3{\pi^2}\log h(\beta)-\frac3{2\pi^2}\Big)X^{1/3}\\
&\qquad+O\big(h(\beta)^{3/2}X^{1/4}\log X\big)+O\big(h(\beta)^{7/4}X^{7/24}\big).
\end{aligned}
\end{equation}

The expression \eqref{eqn:S} is then to be integrated over all $\beta$ to obtain $N_0(X)$ as in \eqref{eqn:N0Xestimateerror}.  However, in this integration, we may suppose
that $|\beta|\ll X^{1/4}$, since $h(\beta)\asymp |\beta|^{-2/3}$ and we may suppose that $h(\beta)X^{1/6}\ge1$.
Thus, integrating the first error term gives $O(X^{1/4}(\log X)^2)$ and integrating the second error
term gives $O(X^{7/24})$.
We conclude that 
\begin{equation}
\label{eq:sec}
\begin{aligned}
\int_{-\infty}^\infty\sum_{\substack{|u|v^2\le h(\beta)X^{1/6}\\v>0}}|u|v^3\,\dD\beta
&= 
\frac{c_0}{2\pi^2}X^{1/3}\log X+\Big(\frac{\gamma_0}2+\frac{6\gamma}{\pi^2}-\frac3{2\pi^2}\Big)c_0X^{1/3}\\
&\qquad+
\frac3{\pi^2}X^{1/3}\int_{-\infty}^\infty h(\beta)^2\log h(\beta)\,\dD\beta+O\big(X^{7/24}\big).
\end{aligned}
\end{equation}

We compute numerically that
\begin{equation}
\int_{-\infty}^\infty h(\beta)^2\log h(\beta)\,d\beta=-18.0878968694\dots
\end{equation}
and so the coefficient of the secondary term of $N_0(X)$ is $c_6 = 1.12042819875\dots$.
\end{proof}

Before proving our main theorem, we prove one lemma, generalizing Lemma \ref{lem:harm}.  For $i\mid 6$ with $i>0$, let
%.  Let $f(i)=\prod_{p|i}(1+1/p)^{-1}$.  Then
\begin{equation}
H_i(x) \colonequals \sum_{\substack{0<u\le x\\u~{\rm squarefree}\\\gcd(u,6)=i}}\frac1u.
%=\frac{f(i)}{\zeta(2)}\log x+\gamma_i+O\Big(\frac{\log x}{x^{1/2}}\Big),
\end{equation}

\begin{lem} \label{lem:harm2}
We have
\[
H_1(x)=\frac1{2\zeta(2)}\log x+\gamma_1+O\Big(\frac{\log x}{x^{1/2}}\Big)
\]
where 
\[
\gamma_1=\frac{\log432}{24\zeta(2)}+\frac{\gamma}{2\zeta(2)}-\frac{\zeta'(2)}{\zeta(2)^2},
\]
and $\gamma$ is Euler's constant.  Moreover, $H_i(x)=\frac1iH_1(\frac xi)$ for $i \mid 6$.
%$ is defined in Lemma \ref{lem:harm} and
%\begin{align*}
%\gamma_2&=\frac{f(i)}{\zeta(2)}\gamma+\frac{\log 4}{9\zeta(2)}-
%\frac{4\zeta'(2)}{3\zeta(2)^2}=0.7895703341\dots,\\
%\gamma_3&=\frac{f(i)}{\zeta(2)}\gamma+\frac{\log27}{16\zeta(2)}-\frac{3\zeta'(2)}{\zeta(2)^2}=0.9081476714\dots,\\
%\gamma_6&=\frac{f(i)}{\zeta(2)}\gamma+\frac{\log432}{24\zeta(2)}-\frac{\zeta'(2)}{\zeta(2)^2}=0.6756622737\dots\,.
%\end{align*}
\end{lem}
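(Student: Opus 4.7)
The plan splits into two independent pieces. First, for the scaling identity $H_i(x) = \frac{1}{i}H_1(x/i)$, I would substitute $u = iu'$. Since the divisors of $6$ are $\{1,2,3,6\}$ and $\gcd(i, 6/i) = 1$ in each case, the conditions ``$u$ squarefree with $\gcd(u,6)=i$'' translate under this substitution into ``$u'$ squarefree with $\gcd(u',6)=1$.'' The substitution is a bijection of index sets, the bound $u \leq x$ becomes $u' \leq x/i$, and $1/u = 1/(iu')$, immediately producing the identity.

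For the asymptotic of $H_1$, I would follow the template of Lemma~\ref{lem:harm}, writing $\mathbf{1}[u \text{ squarefree}] = \sum_{d^2 \mid u}\mu(d)$ and observing that the restriction $\gcd(u,6)=1$ confines nonzero contributions to $\gcd(d,6)=1$. Writing $u = d^2 v$ gives
\[
H_1(x) = \sum_{\substack{d \leq x^{1/2}\\\gcd(d,6)=1}}\frac{\mu(d)}{d^2}\sum_{\substack{v \leq x/d^2\\\gcd(v,6)=1}}\frac{1}{v}.
\]
The inner sum I would evaluate by inclusion--exclusion on divisibility of $v$ by $2$ and $3$:
\[
\sum_{\substack{v \leq y\\\gcd(v,6)=1}}\frac{1}{v} = \sum_{d \mid 6}\frac{\mu(d)}{d}\bigl(\log(y/d) + \gamma + O(d/y)\bigr) = \frac{1}{3}\log y + \frac{\gamma}{3} + \frac{\log 12}{6} + O(1/y),
\]
using $\sum_{d\mid 6}\mu(d)/d = 1/3$ and $\sum_{d\mid 6}\mu(d)\log d/d = -\log 12/6$. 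Substituting with $y = x/d^2$ and splitting $\log(x/d^2) = \log x - 2\log d$, I would extend the $d$-sums to infinity (at a cost of $O(x^{-1/2}\log x)$) and evaluate the resulting series via the Euler product $\sum_{\gcd(d,6)=1}\mu(d)/d^s = [\zeta(s)(1-2^{-s})(1-3^{-s})]^{-1}$. Evaluation at $s=2$ yields the value $\frac{3}{2\zeta(2)}$, producing the main term $\frac{\log x}{2\zeta(2)}$, and logarithmic differentiation at $s=2$ supplies the companion sum $\sum_{\gcd(d,6)=1}\mu(d)\log d/d^2$ in closed form.

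The main obstacle is purely algebraic bookkeeping: after isolating the $\frac{\gamma}{2\zeta(2)}$ and $-\frac{\zeta'(2)}{\zeta(2)^2}$ pieces, the remaining $\log 2$ and $\log 3$ contributions---arising from the $\frac{\log 12}{6}$ constant in the inner sum and from the $p \in \{2,3\}$ local factors of the logarithmic derivative of the Euler product at $s=2$---must combine into $\frac{\log 432}{24\zeta(2)}$. Since $432 = 2^4 \cdot 3^3$, this reduces to checking a clean rational identity for the coefficients of $\log 2$ and $\log 3$, which I expect to drop out after combining fractions; this step is analogous to the end of the proof of Lemma~\ref{lem:harm}, but with two extra Euler factors to track.
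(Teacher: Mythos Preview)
Your proposal is correct and follows essentially the same approach as the paper, which simply says the proof ``follows the same lines as Lemma~\ref{lem:harm}'' and omits details. You have correctly supplied those details: the M\"obius-squared detection of squarefreeness together with an inclusion--exclusion over divisors of $6$ for the coprimality constraint, and the Euler-product evaluation of $\sum_{\gcd(d,6)=1}\mu(d)d^{-2}$ and its logarithmic derivative at $s=2$; the resulting $\log 2$, $\log 3$ bookkeeping does indeed collapse to $\frac{\log 432}{24\zeta(2)}$ as you anticipate, and your bijection $u=iu'$ for the scaling identity is exactly right since $6$ is squarefree.
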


\begin{proof}
The proof follows the same lines as Lemma \ref{lem:harm}.  
% \jv{Proof?  Do we care about the more general version of this lemma?}
% \carlp{Should we remark that Lemma \ref{lem:harm} follows as a corollary?} \jv{OK great, I think it's clear that this can be proved in the same way.  Is something really a corollary if it's proven in the same way?  I'd just leave this as is.}
\end{proof}

We now prove our main result.

\begin{proof}[Proof of Theorem \textup{\ref{thm:mainthm}}]
The asymptotic for $N(X)$ was proven in Theorem \ref{thm:NXasymp} and a secondary term with power-saving error term for $N_0(X)$ was proven in Theorem \ref{thm:N0Xsecond}.  To finish, we claim that the local conditions \hyperref[W1]{(W1)}--\hyperref[W7]{(W7)} that move us from $N_0(X)$ to $N(X)$ can be applied in the course of the argument for Theorem \ref{thm:N0Xsecond} to obtain an (effectively computable) constant.  

Let $i,j,k,d \in \Z_{>0}$ satisfy: $i\mid 6$, $d$ squarefree and coprime to 6, $j\mid 12$, and $k\mid 6^4$.
Let $N_{i,j,k,d}(X)$ denote
the number of triples $u,v,w$ counted by $N_0(X)$ with $\gcd(u,6)=i$, $jd\mid v$, and $kd^3\mid w$.
Then with $i,j,k$ running over triples consistent with conditions \hyperref[W1]{(W1)}--\hyperref[W6]{(W6)}, a signed sum of the counts
$N_{i,j,k,d}(X)$ gives $N(X)$.  For example, take the case of $uvw$ coprime to 6, which satisfies
\hyperref[W1]{(W1)}--\hyperref[W6]{(W6)}.  The contribution of these triples to $N(X)$ is
\[
\sum_{j\mid 6}\sum_{k\mid 6}\sum_{\gcd(d,6)=1}\mu(j)\mu(k)\mu(d)N_{1,j,k,d}(X).
\]
We have similar expressions for other portions of the $u,v,w$-domain of triples.

We now estimate $N_{i,j,k,d}$ and control the contribution to $N(X)$ from large $d$.  For the latter, since $\abs{vw}\le A\ll X^{1/3}$, we have
$d\ll X^{1/12}$; so we may suppose that $d$ is so bounded.  Getting a good estimate for $N_{i,j,k,d}$ follows
in exactly the same way as with $N_0$.  In particular, we have the analogue of \eqref{eqn:N0Xestimateerror}:
\begin{equation}
N_{i,j,k,d}(X)=\int_{-\infty}^\infty\sum_{\substack{|u|v^2\le h(\beta)X^{1/6}\\\gcd(u,6)=i \\ jd\,|\,v}}\frac{|u|v^3}{kd^3}\,\dD\beta +O(X^{1/6}),
\end{equation}
where it is understood that $u$ is squarefree and $v>0$.
The sum here is estimated in the same way, by first considering the contribution when $|u|\le H^2$,
where $H=h(\beta)^{1/4}X^{1/24}$, then the contribution when $v\le H$, and finally the contribution
when both $|u|\le H^2$ and $v\le H$.  To accomplish this, we use the following asymptotic estimates:
\begin{equation}
\begin{aligned}
 \sum_{\substack{0<v\le x\\jd\,|\,v}}v^3  &=\frac14\frac{x^4}{jd}+O(x^3),\\ 
  \sum_{\substack{0<v\le x\\jd\,|\,v}}\frac1v&=\frac1{jd}\log x+\frac{\gamma-\log(jd)}{jd}+O\Big(\frac1{x^{1/2}d^{1/2}}\Big),\\
  \sum_{\substack{|u|~{\rm squarefree}\\|u|\le x\\\gcd(u,6)=i}}|u|&=\frac1{i\zeta(2)}x^2+O(x^{3/2}).
\end{aligned}
\end{equation}
We also need the sum of $1/|u|$, accomplished in Lemma \ref{lem:harm2}.

Putting these ingredients together, we get that
\begin{equation}
N_{i,j,k,d}(X)=\frac{c_{i,j,k}}{d^4}X^{1/3}\log X+\frac{c'_{i,j,k}}{d^4}X^{1/3}+O\Big(\frac{X^{7/24}}{d^3}\Big),
\end{equation}
where $c_{i,j,k},c'_{i,j,k}=O(1)$ uniformly, and summing these contribution gives the result.  
\end{proof}

\section{Computations} \label{sec:computations}

We conclude with some computations that give numerical verification of our asymptotic expression.  

We computed the functions $N_0(X)$ and $N(X)$ as follows.  First, we restrict to $u>0$ (still squarefree), since this gives exactly half the count.  Second, we loop over $u$ up to $\lfloor \frac{11}{8}X^{1/6} \rfloor$ (valid as in the proof of Lemma \ref{lem:Abaalpha}) and keep only squarefree $u$.  Then we loop over $v$ from $0$ up to $\lfloor \sqrt{\tfrac{11}{8}X^{1/6}/u} \rfloor$.  This gives us the value of $a=uv^2$.  Then plugging into $h$ gives
\begin{equation} 
\beta_{\textup{max}} \leq \max\left(\left\{\frac{X^{1/3}}{4^{1/3}a^2}, 3 + \sqrt{12+\frac{4}{\sqrt{3}}\frac{X^{1/2}}{a^3}}\right\}\right). 
\end{equation}
Then we loop over $w$ from $-\beta_{\textup{max}}uv^3$ to $\beta_{\textup{max}}uv^3$, ignoring $w=0$, and we take $A=uvw$.  We then check that $\abs{4A^3} \leq X$; and letting 
\[ B=\frac{1}{12}\left(\frac{A^2}{a} - 6Aa - 3a^3\right) \]
we check that $\abs{27B^2} \leq X$, and if so add to the count for $N_0(X)$.  For $N(X)$, we further check the local conditions \hyperref[B1]{(B1)}--\hyperref[B3]{(B3)} and \hyperref[N4]{(N4)} (or, equivalently, \hyperref[W1]{(W1)}--\hyperref[W7]{(W7)}).

In this manner, we thereby compute the data in Table \ref{tab2} for $X=10^m$: we computed $N_0(10^m)$ for $m \leq 20$ and $N(10^m)$ for $m \leq 25$.

A best fit with
\[ N_0(X) \stackrel?= c_7X^{1/3}\log(X) + c_8X^{1/3} \]
confirms
\begin{equation}
\begin{aligned}
c_7 =0.46527 &\approx 0.46513\ldots = \frac{c_0}{2\pi^2} \\
c_8 =1.1121 &\approx 1.1204\ldots = c_6
\end{aligned}
\end{equation}
and the difference between the first two columns is indeed small.  Similarly, a best fit with
\[ N(X) \stackrel?= c_9X^{1/3}\log(X) + c_{10}X^{1/3} \]
gives
\begin{equation}
\begin{aligned}
c_9 = 0.107400 &\approx 0.107437\ldots = c_1 \\
c_{10} = 0.165612 &\approx c_2 
\end{aligned}
\end{equation}
confirms the asymptotic, with an approximate value for the constant $c_2 \approx 0.16$ as also indicated in the fourth column.
  
  \begin{equation} \label{tab2}\addtocounter{equation}{1} \notag
\begin{gathered}
{\renewcommand{\arraystretch}{1.1}
\begin{tabular}{c||c|c||c|c} 
$m$ & $N_0(X)$ & $\displaystyle{\frac{c_0}{2\pi^2}}X^{1/3}\log(X) + c_6 X^{1/3}$ & $N(X)$ & $\displaystyle{\frac{N(X)-c_1X^{1/3}\log X}{X^{1/3}}}$\\[0.1in]
\hline
\hline
3 & 40 & 43 & 2 & -0.54215 \\
4 & 106 & 116 & 16 & -0.24688 \\
5 & 292 & 301 & 54 & -0.07352 \\
6 & 728 & 755 & 144 & -0.04430 \\
\vdots & \vdots & \vdots &\vdots &\vdots\\
% N(10^6) &= 144 \\
% N(10^7) &= 386 \\
% N(10^8) &= 956 \\
% N(10^9) &= 2338 \\
% N(10^{10}) &= 5596 \\
% N(10^{11}) &= 13208 \\
% N(10^{12}) &= 31116 \\
% N(10^{13}) &= 72510 \\
% N(10^{14}) &= 167894 \\
% N(10^{15}) &= 387080 \\
16 & 3931970 & 3933212 & 887334 & 0.16051 \\
17 & 8968482 & 8970960 & 2026332 & 0.16008 \\
18 & 20396372 & 20398344 & 4615666 & 0.16276 \\
19 & 46250606 & 46254289 & 10476028 & 0.16226 \\
20 & 104614810 & 104622964 & 23720904 & 0.16285 \\
21 & 236105316 & 236113295 & 53583854 & 0.16333 \\
22 & 531764374 & 531764568 & 120772894 & 0.16335 \\
23 & 1195334414 & 1195363230 & 271694240 & 0.16366 \\
24 & 2682372754 & 2682431541 & 610085848 & 0.16366 \\
25 & 6009687100 & 6009862508 & 1367646478 & 0.16347 \\
\end{tabular}} \\
\text{Table \ref{tab2}: Data before and after applying local conditions}
\end{gathered}
\end{equation}

\end{document}